\newtheorem{theorem}{Theorem}[section]
\newtheorem{proposition}[theorem]{Proposition}
\newtheorem{corollary}[theorem]{Corollary}
\newtheorem{conjecture}[theorem]{Conjecture}
\newtheorem{lemma}[theorem]{Lemma}
\newtheorem{definition}[theorem]{Definition}
\newtheorem{question}[theorem]{Question}
\DeclareMathOperator{\Stab}{Stab}
\DeclareMathOperator{\lcm}{lcm}
\begin{document}

\title[]{A stronger connection between the Erd\H{o}s-Burgess and Davenport constants} \keywords{Erd\H{o}s-Burgess constant, Davenport constant, zero-sum problem, idempotent element.}
\subjclass[2010]{11B30, 05D05}

\author[]{Noah Kravitz}
\address[]{Grace Hopper College, Yale University, New Haven, CT 06510, USA}
\email{noah.kravitz@yale.edu}
\author[]{Ashwin Sah}
\address[]{Massachusetts Institute of Technology, Cambridge, MA 02139 USA}
\email{asah@mit.edu}

\begin{abstract} 
The Erd\H{o}s-Burgess constant of a semigroup $S$ is the smallest positive integer $k$ such that any sequence over $S$ of length $k$ contains a nonempty subsequence whose elements multiply to an idempotent element of $S$.  In the case where $S$ is the multiplicative semigroup of $\mathbb{Z}/n\mathbb{Z}$, we confirm a conjecture connecting the Erd\H{o}s-Burgess constant of $S$ and the Davenport constant of $(\mathbb{Z}/n\mathbb{Z})^{\times}$ for $n$ with at most two prime factors.  We also discuss the extension of our techniques to other rings.
\end{abstract}
\maketitle

\section{Introduction and main results}\label{sec:introduction}

The Erd\H{o}s-Burgess constant is an invariant which measures how much a semigroup avoids idempotent products.  An element $x$ of a multiplicative semigroup is called idempotent if $x^2=x$.  We offer the following formal definition.

\begin{definition}\label{def:constant}
The Erd\H{o}s-Burgess constant of a multiplicative semigroup $S$ (denoted $I(S)$) is the smallest positive integer $k$ such that any sequence of $k$ (not necessarily distinct) elements of $S$ contains a nonempty subsequence (preserving relative order) whose elements multiply to an idempotent element of $S$.  If no such $k$ exists, we say that $I(S)=\infty$.
\end{definition}

The most interesting cases arise when $S$ is the multiplicative semigroup of a finite commutative ring, in which case we let $I_r(R)$ denote the Erd\H{o}s-Burgess constant of the multiplicative semigroup of $R$.  When $R=\mathbb{Z}/n\mathbb{Z}$, clearly the idempotent elements of $R$ are exactly the elements that are equivalent to $0$ or $1$ modulo each prime power dividing $n$.
\\

The problem of computing these constants originated in a question of Erd\H{o}s: Is it always true that $I(S)\leq |S|$ for a finite semigroup $S$?  In 1969, Burgess \cite{burgess1969problem} answered this question in the affirmative when $S$ is commutative or contains only a single idempotent element.  In 1972, Gillam, Hall, and Williams \cite{gillam1972finite} proved the stronger result that $I(S)\leq |S|-|E|+1$ for all finite semigroups $S$, where $E$ is the set of idempotent elements of $S$.  They also showed that this bound is sharp in the sense that for any positive integers $m<n$, there exists a semigroup $S$ with $|S|=n$, $|E|=m$, and $I(S)=|S|-|E|+1$.
\\

The computation of Erd\H{o}s-Burgess constants is closely related to the the study of zero-sum problems.  (See \cite{caro1996zero, gao2006zero} for an overview of this field.)  For a finite additive abelian group $G$, a typical zero-sum problem asks for the smallest positive integer $k$ such that any sequence of $k$ elements of $G$ contains a nonempty subsequence whose terms sum to $0$ while also fulfilling certain other properties.  The most celebrated result in this area is the Erd\H{o}s-Ginzburg-Ziv Theorem \cite{erdos1961theorem}, published in 1961, which says that in any set of $2n-1$ integers, there are $n$ whose sum is divisible by $n$, whereas the same is not true of all sets of $2n-2$ integers.  Popular zero-sum group invariants include the Erd\H{o}s-Ginzburg-Ziv, Olson, Harborth, and Davenport constants.  The last of these will be the most relevant to our study of Erd\H{o}s-Burgess constants.

\begin{definition}\label{def:davenport}
The Davenport constant of a finite abelian group $G$ (denoted $\mathsf{D}(G)$) is the smallest positive integer $k$ such that any sequence of $k$ elements of $G$ contains a nonempty subsequence whose terms sum to $0$.
\end{definition}

The study of this group invariant traces back to a 1963 paper of Rogers \cite{rogers1963combinatorial} and has appeared more recently in a variety of contexts.  (See, e.g., \cite{alford1994there, alon1993zero, delorme2001some, gao2014upper, wang2016davenport}.)
\\

The connection between the Erd\H{o}s-Burgess and Davenport constants first appeared in a recent paper of Wang \cite{wang2018structure} on maximal sequences over semigroups that avoid idempotent products.  When $S$ is a finite abelian group, for instance, the identity is the only idempotent element, so $I(S)=\mathsf{D}(S)$ trivially.  In two papers in 2018, Hao, Wang, and Zhang \cite{hao2018erd, hao2018modular} studied this connection for the multiplicative semigroups of $\mathbb{Z}/n\mathbb{Z}$ and $\mathbb{F}_q[t]/\mathfrak{a}$, where $\mathfrak{a}$ is an ideal of $\mathbb{F}_q[t]$.  For any integer $n>1$, let $\Omega(n)$ denote the total number of primes in the prime factorization of $n$ (with multiplicity), and let $\omega(n)$ denote the number of distinct primes dividing $n$.  Hao, Wang, and Zhang prove the following theorem.

\begin{theorem}[{\cite[Theorem~1.1]{hao2018modular}}]
For any integer $n>1$, we have
$$I_r(\mathbb{Z}/n \mathbb{Z})\geq \mathsf{D}((\mathbb{Z}/n \mathbb{Z})^{\times})+\Omega(n)-\omega(n).$$
Moreover, equality holds if $n$ is either a prime power or a product of distinct primes.
\label{thm:lower}
\end{theorem}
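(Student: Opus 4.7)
My plan is to prove the two assertions separately. The lower bound will be established by an explicit ``bad'' sequence based on CRT, while the two equality cases each call for a matching upper bound via somewhat different arguments.

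For the lower bound, write $n = p_1^{a_1} \cdots p_k^{a_k}$ with $k = \omega(n)$ and $\Omega(n) = \sum a_i$, and set $G = (\mathbb{Z}/n\mathbb{Z})^{\times}$. I would take a zero-sum-free sequence $(u_1, \ldots, u_{\mathsf{D}(G) - 1})$ of maximal length in $G$ and append, for each $i$, exactly $a_i - 1$ copies of the element $z_i \in \mathbb{Z}/n\mathbb{Z}$ characterized via CRT by $z_i \equiv p_i \pmod{p_i^{a_i}}$ and $z_i \equiv 1 \pmod{p_\ell^{a_\ell}}$ for $\ell \neq i$. The resulting sequence has length $\mathsf{D}(G) - 1 + \Omega(n) - \omega(n)$. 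Examining any nonempty subsequence modulo each $p_\ell^{a_\ell}$, its image factors as $u \cdot p_\ell^{c}$ with $u \in (\mathbb{Z}/p_\ell^{a_\ell}\mathbb{Z})^{\times}$ and $0 \leq c \leq a_\ell - 1$; for this to be idempotent (i.e., $0$ or $1$ modulo $p_\ell^{a_\ell}$) one needs $c = 0$ and $u = 1$. Imposing this in every coordinate forces the subsequence to contain no $z_i$'s and to have its unit-part multiplying to $1$ in $G$, contradicting the zero-sum-freeness of $(u_1, \ldots, u_{\mathsf{D}(G)-1})$.

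For the prime-power case $n = p^a$, the upper bound follows from a clean dichotomy: given any sequence of length $\mathsf{D}((\mathbb{Z}/p^a\mathbb{Z})^{\times}) + a - 1$, either at least $a$ of its elements are divisible by $p$, in which case their product lies in $(p^a) = (0)$ and is the idempotent $0$, or at least $\mathsf{D}((\mathbb{Z}/p^a\mathbb{Z})^{\times})$ of its elements are units, at which point the Davenport constant immediately supplies a nonempty subsequence with product $1$.

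For the squarefree case $n = p_1 \cdots p_k$ (where $\Omega(n) - \omega(n) = 0$), the heart of the argument is a canonical decomposition available only in this setting: since every CRT-coordinate of $x \in \mathbb{Z}/n\mathbb{Z}$ is either $0$ or a unit in $\mathbb{Z}/p_j\mathbb{Z}$, one may write $x = \tilde{x} \cdot e(x)$, where $e(x)$ is the idempotent whose $j$-th CRT-coordinate indicates whether $p_j \nmid x$, and $\tilde{x} \in (\mathbb{Z}/n\mathbb{Z})^{\times}$ is the lift of $x$ obtained by replacing each vanishing coordinate by $1$. Given any sequence $(x_1, \ldots, x_L)$ of length $L = \mathsf{D}((\mathbb{Z}/n\mathbb{Z})^{\times})$, Davenport supplies a nonempty $T$ with $\prod_{i \in T} \tilde{x}_i = 1$; by commutativity of $\mathbb{Z}/n\mathbb{Z}$ and closure of idempotents under multiplication, $\prod_{i \in T} x_i = \prod_{i \in T} e(x_i)$ is then automatically an idempotent. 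The main obstacle I anticipate is the restriction of this strategy to the two extremal structural cases: the unit-idempotent decomposition used in the squarefree argument fails once any prime-power factor has exponent $\geq 2$ (e.g., $p \in \mathbb{Z}/p^2\mathbb{Z}$ is neither an idempotent nor a unit multiple of one), which is presumably the very gap motivating the stronger techniques developed in the rest of the paper.
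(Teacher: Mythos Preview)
Your proof is correct and follows essentially the same strategy as the paper's own treatment in its generalization, Theorem~\ref{thm:ufd} (the statement itself is quoted from \cite{hao2018modular} without a separate proof here). The bad-sequence construction for the lower bound, the dichotomy argument for prime powers, and the squarefree reduction all match; your decomposition $x=\tilde{x}\cdot e(x)$ is precisely the squarefree instance of the paper's Lemma~\ref{lem:reduction}, just phrased more structurally.
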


They also conjecture that this inequality is an equality for all $n>1$.

\begin{conjecture}[{\cite[Conjecture~3.2]{hao2018modular}}]\label{conj:equality}
For any integer $n>1$, we have
$$I_r(\mathbb{Z}/n \mathbb{Z})=\mathsf{D}((\mathbb{Z}/n \mathbb{Z})^{\times})+\Omega(n)-\omega(n).$$
\end{conjecture}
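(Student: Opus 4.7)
The lower bound in Conjecture~\ref{conj:equality} is exactly Theorem~\ref{thm:lower}, so the task is to prove the matching upper bound $I_r(\mathbb{Z}/n\mathbb{Z}) \leq \mathsf{D}((\mathbb{Z}/n\mathbb{Z})^\times) + \Omega(n) - \omega(n)$. Theorem~\ref{thm:lower} already handles $\omega(n)=1$ and $\Omega(n)=\omega(n)$, so I will address $n = p^a q^b$ with $\max(a,b) \geq 2$ and indicate why $\omega(n) \geq 3$ looks genuinely harder.

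For such $n$, set $L := \mathsf{D}((\mathbb{Z}/n\mathbb{Z})^\times) + (a-1) + (b-1)$ and take a sequence $T$ of length $L$ in $\mathbb{Z}/n\mathbb{Z}$. By the Chinese Remainder Theorem, record each $x_i$ as a pair $(y_i, z_i) \in \mathbb{Z}/p^a\mathbb{Z} \times \mathbb{Z}/q^b\mathbb{Z}$ with capped valuations $\alpha_i := \min(v_p(y_i), a)$ and $\beta_i := \min(v_q(z_i), b)$. The four idempotents of $\mathbb{Z}/n\mathbb{Z}$ are $0$, $1$, $e_p$ (equal to $1 \bmod p^a$ and $0 \bmod q^b$), and $e_q$ (dually); a nonempty subsequence indexed by $S$ multiplies to one of these precisely when, in each of the two coordinates, its valuation sum either saturates or vanishes with unit-group product equal to $1$.

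I would split into cases on the totals $N_p := \sum_i \alpha_i$ and $N_q := \sum_i \beta_i$. If $N_p \geq a$ and $N_q \geq b$, the subsequence of all non-unit indices multiplies to $0$. If $N_p \leq a-1$ and $N_q \leq b-1$, at most $(a-1)+(b-1)$ indices are non-units, so the set $U$ of unit indices has $|U| \geq \mathsf{D}((\mathbb{Z}/n\mathbb{Z})^\times)$ and the Davenport constant supplies a unit subsequence with product $1$. By symmetry the only remaining case is $N_p \geq a$, $N_q \leq b-1$: writing $P := \{i : \alpha_i > 0, \beta_i = 0\}$, if $\sum_P \alpha_i < a$ then $|P| < a$ and again $|U| \geq \mathsf{D}((\mathbb{Z}/n\mathbb{Z})^\times)$, so we may assume $\sum_P \alpha_i \geq a$ and must find $S \subseteq U \cup P$ with $\sum_{i \in S} \alpha_i \geq a$ and $\prod_{i \in S} z_i = 1 \pmod{q^b}$, using only $|U \cup P| \geq \mathsf{D}((\mathbb{Z}/n\mathbb{Z})^\times) + (a-1)$.

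This final sub-problem is the crux. The naive idea---encode each $i \in U \cup P$ as $(z_i, \alpha_i \bmod a) \in (\mathbb{Z}/q^b\mathbb{Z})^\times \times \mathbb{Z}/a\mathbb{Z}$ and invoke Davenport on the product---fails because the Davenport constant of this product group can exceed $\mathsf{D}((\mathbb{Z}/n\mathbb{Z})^\times) + (a-1)$ when the orders of the two factors share common divisors. Instead I would select a minimal $P_0 \subseteq P$ with $\sum_{i \in P_0} \alpha_i \geq a$, set $u := \prod_{i \in P_0} z_i$, and seek a zero-sum in the auxiliary sequence obtained by prepending $u$ to the $z$-values of $(U \cup P) \setminus P_0$; a zero-sum that uses $u$ yields the desired $S$, while a zero-sum lying inside $(U \cup P) \setminus P_0$ is either a unit zero-sum (giving the idempotent $1$) or can be folded back into $P_0$ to reduce to a smaller instance of the same sub-problem, with the accumulated $p$-valuations controlling termination. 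The genuine obstacle to the full conjecture is the case $\omega(n) \geq 3$: the analogous reduction must enforce saturation or triviality across multiple primes simultaneously, and the known Davenport-constant bounds on rank-$\omega(n)$ groups do not appear to leave enough slack for this recursion to close, so a new ingredient---perhaps a valuation-weighted zero-sum theorem---seems to be required.
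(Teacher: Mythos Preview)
First, note that the paper does \emph{not} prove Conjecture~\ref{conj:equality} in full: it is stated as an open conjecture, and the paper establishes it only when $\omega(n)\le 2$ (Theorem~\ref{thm:twoprimes}) and for $n=2p^kq^\ell$ (Corollary~\ref{cor:main}). Your proposal is likewise incomplete for $\omega(n)\ge 3$, which you acknowledge; so the relevant comparison is between your argument for $n=p^aq^b$ and the paper's proof of Theorem~\ref{thm:twoprimes}.

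Your case split matches the paper's up to the crux sub-problem, but the recursive step you outline has a genuine gap. Your auxiliary sequence lives in $(\mathbb{Z}/q^b\mathbb{Z})^\times$ (it consists of $u$ together with the $z$-values of $(U\cup P)\setminus P_0$). When the resulting zero-sum lies entirely inside the $z$-values of $U$, you only obtain $\prod_{i\in S'} z_i \equiv 1\pmod{q^b}$; this does \emph{not} force $\prod_{i\in S'} y_i \equiv 1\pmod{p^a}$, so the product over $S'$ need not be the idempotent~$1$ in $\mathbb{Z}/n\mathbb{Z}$. The claim ``a unit zero-sum gives the idempotent~$1$'' therefore fails as stated. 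The ``folding back'' branch is also not made precise: if $S'$ meets $P\setminus P_0$ but $\sum_{i\in S'}\alpha_i<a$, it is unclear what smaller instance you pass to or why the available length still meets the Davenport threshold after the reduction.

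The paper avoids this difficulty by a different mechanism. It works modulo $q^\ell$ but keeps track of both the products $\prod_{\ge k}(A)$ (via Lemma~\ref{lem:k-fold}) and $\prod_{\ge 0}(B)$, then passes to the quotient $G/H$ where $H=\Stab_G(\prod_{\ge 0}(B))$; the Stabilizer Bound (Lemma~\ref{lem:stabilizer-bound}) controls the size of the image of $\prod_{\ge 0}(B)$, and a pigeonhole in $G/H$ either produces the desired $xy\equiv 1\pmod{q^\ell}$ or forces many elements of $B$ to land in $H$. In the latter case those elements lie in a subgroup isomorphic to $C_{p^{k-1}(p-1)}\times C_{|H|}$, and the explicit rank-$2$ Davenport formula (Corollary~\ref{cor:davenport}) together with the $\gcd$/$\lcm$ inequality (Proposition~\ref{prop:gcd-ineq}) shows there are enough of them to yield a genuine zero-sum in $(\mathbb{Z}/n\mathbb{Z})^\times$, not merely modulo $q^\ell$. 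This last step is precisely what your recursion lacks: a way to upgrade a $q^b$-zero-sum over units to a full $(\mathbb{Z}/n\mathbb{Z})^\times$-zero-sum, which the paper achieves by restricting to the smaller subgroup and invoking the exact Davenport value there.
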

In \cite{hao2018erd}, they derive analogous results relating $I_r(\mathbb{F}_q[t]/\mathfrak{a})$ and $\mathsf{D}((\mathbb{F}_q[t]/\mathfrak{a})^{\times})$ and pose the corresponding conjecture.  Wang \cite{wang2018erd} has investigated other aspects of the Erd\H{o}s-Burgess constant, especially in the context of infinite semigroups.
\\

In this paper, we resolve Conjecture~\ref{conj:equality} for some classes of positive integers and make progress on others. In Section $2$, we derive an upper bound on $I_r(\mathbb{Z}/n \mathbb{Z})$ for the case where $n$ has only a single repeated prime factor.  We let $\phi$ denote Euler's totient function.

\begin{restatable}{theorem}{thmsquarefree}\label{thm:squarefree}
Let $n=sp^k$, where $s>1$ is a squarefree integer, $p$ is a prime not dividing $s$, and $k$ is a positive integer.  Then
$$I_r(\mathbb{Z}/n\mathbb{Z}) \leq \mathsf{D}((\mathbb{Z}/n\mathbb{Z})^\times) + (k - 1)+(\phi(s)-1).$$
\end{restatable}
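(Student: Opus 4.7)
The plan is to decompose each $a_i \in \mathbb{Z}/n\mathbb{Z}$ via the Chinese Remainder Theorem as a pair $(b_i, c_i) \in \mathbb{Z}/s\mathbb{Z} \times \mathbb{Z}/p^k\mathbb{Z}$, and to further decompose $b_i \in \prod_{q \mid s} \mathbb{F}_q$. The crucial device is a unit lift $\hat{b}_i \in (\mathbb{Z}/s\mathbb{Z})^\times$, defined coordinate-wise by setting $\hat{b}_i \equiv b_i \pmod{q}$ when $q \nmid b_i$ and $\hat{b}_i \equiv 1 \pmod{q}$ when $q \mid b_i$. The key lemma I will verify is that whenever $\prod_{i \in J} \hat{b}_i = 1$ in $(\mathbb{Z}/s\mathbb{Z})^\times$, the product $\prod_{i \in J} b_i$ is automatically an idempotent of $\mathbb{Z}/s\mathbb{Z}$: for each $q \mid s$, the $q$-component of $\prod_{i \in J} b_i$ equals $0$ if some $i \in J$ has $q \mid b_i$, and otherwise agrees with $\prod_{i \in J}(\hat{b}_i \bmod q) = 1$.

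Writing $N = \mathsf{D}((\mathbb{Z}/n\mathbb{Z})^\times) + (k-1) + (\phi(s)-1)$ and letting $P = \{i : p \mid a_i\}$, I will split on whether $|P| \leq k + \phi(s) - 2$ or $|P| \geq k + \phi(s) - 1$. In the first case, $|P^c| \geq \mathsf{D}((\mathbb{Z}/n\mathbb{Z})^\times)$, and for each $i \in P^c$ the pair $(\hat{b}_i, c_i)$ lies in $(\mathbb{Z}/s\mathbb{Z})^\times \times (\mathbb{Z}/p^k\mathbb{Z})^\times \cong (\mathbb{Z}/n\mathbb{Z})^\times$. Davenport's constant then produces a nonempty $J \subseteq P^c$ with $\prod_J(\hat{b}_i, c_i) = (1,1)$, and the key lemma upgrades this to the statement that $\prod_J a_i$ is an idempotent of $\mathbb{Z}/n\mathbb{Z}$ (with $p$-component $1$).

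In the second case, I iteratively extract disjoint nonempty zero-sum subsequences from the sequence $(\hat{b}_i)_{i \in P}$ inside the group $(\mathbb{Z}/s\mathbb{Z})^\times$; each extraction is guaranteed by Davenport's constant so long as at least $\mathsf{D}((\mathbb{Z}/s\mathbb{Z})^\times) \leq \phi(s)$ elements remain unchosen. I continue until the concatenated subsequence $J \subseteq P$ satisfies $|J| \geq k$. Then $\prod_J \hat{b}_i = 1$ as a product of ones, so $\prod_J b_i$ is idempotent by the key lemma; and since $|J| \geq k$ factors $c_i$ are each divisible by $p$, the $p$-valuation forces $\prod_J c_i \equiv 0 \pmod{p^k}$. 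Together these give an idempotent product in $\mathbb{Z}/n\mathbb{Z}$.

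The main delicate point I expect is the iterative-extraction bookkeeping in the second case: if the process terminated with running total $t < k$ and remaining pool of size $r < \mathsf{D}((\mathbb{Z}/s\mathbb{Z})^\times) \leq \phi(s)$, then $|P| = t + r \leq (k-1) + (\phi(s)-1) = k + \phi(s) - 2$, contradicting the hypothesis $|P| \geq k + \phi(s) - 1$. This is exactly where the two slack terms $(k-1)$ and $(\phi(s)-1)$ in the theorem's bound get consumed.
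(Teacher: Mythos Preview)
Your proof is correct and follows essentially the same strategy as the paper. Your unit lift $\hat{b}_i$ is precisely the map the paper isolates as its reduction lemma (Lemma~\ref{lem:reduction}), just inlined rather than applied upfront; and your iterated Davenport extraction in Case~2 replaces the paper's $\prod_{\ge k}$ counting lemma (Lemma~\ref{lem:k-fold}), but both devices accomplish the same goal of producing, among the $p$-divisible elements, a subsequence of length at least $k$ whose product is $\equiv 1 \pmod{s}$.
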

We remark that this upper bound is $\phi(s) - 1$ greater than the conjectured value of $I_r(\mathbb{Z}/n\mathbb{Z})$. In Section $3$, we relate $I_r(\mathbb{Z}/2m\mathbb{Z})$ to $I_r(\mathbb{Z}/m\mathbb{Z})$ when $m$ is odd.

\begin{restatable}{theorem}{thmtwom}\label{thm:twom}
Let $m>1$ be an odd integer.  Then
$$I_r(\mathbb{Z}/2m\mathbb{Z})=I_r(\mathbb{Z}/m\mathbb{Z}).$$
\end{restatable}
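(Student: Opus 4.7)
The plan is to apply the Chinese Remainder Theorem decomposition
\[
\mathbb{Z}/2m\mathbb{Z} \;\cong\; \mathbb{Z}/2\mathbb{Z} \times \mathbb{Z}/m\mathbb{Z}
\]
as multiplicative semigroups, and to exploit the special feature that every element of $\mathbb{Z}/2\mathbb{Z}$ is idempotent: since $0^2=0$ and $1^2=1$, a pair $(a,b)$ is idempotent in $\mathbb{Z}/2\mathbb{Z}\times \mathbb{Z}/m\mathbb{Z}$ if and only if $b$ is idempotent in $\mathbb{Z}/m\mathbb{Z}$. With this observation, the theorem should reduce to two nearly symmetric directions.

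For the upper bound $I_r(\mathbb{Z}/2m\mathbb{Z}) \leq I_r(\mathbb{Z}/m\mathbb{Z})$, I would take an arbitrary sequence $x_1,\ldots,x_L$ in $\mathbb{Z}/2m\mathbb{Z}$ with $L = I_r(\mathbb{Z}/m\mathbb{Z})$, write $x_i \leftrightarrow (a_i,b_i)$ under CRT, and apply the definition of $I_r(\mathbb{Z}/m\mathbb{Z})$ to the projected sequence $b_1,\ldots,b_L$ in $\mathbb{Z}/m\mathbb{Z}$. This produces a nonempty subset $I$ such that $\prod_{i\in I} b_i$ is idempotent in $\mathbb{Z}/m\mathbb{Z}$; then $\prod_{i\in I}x_i$ corresponds to $(\prod_{i\in I}a_i,\prod_{i\in I}b_i)$, whose first coordinate is automatically idempotent in $\mathbb{Z}/2\mathbb{Z}$, so the whole product is idempotent in $\mathbb{Z}/2m\mathbb{Z}$.

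For the lower bound $I_r(\mathbb{Z}/2m\mathbb{Z}) \geq I_r(\mathbb{Z}/m\mathbb{Z})$, I would take a sequence $b_1,\ldots,b_{L-1}$ in $\mathbb{Z}/m\mathbb{Z}$ of length $L-1 = I_r(\mathbb{Z}/m\mathbb{Z})-1$ witnessing that the Erd\H{o}s-Burgess constant is not attained (i.e., no nonempty subsequence product is idempotent), and lift it to the sequence $(0,b_1),\ldots,(0,b_{L-1})$ in $\mathbb{Z}/2\mathbb{Z}\times \mathbb{Z}/m\mathbb{Z}$. Any nonempty subsequence product has the form $(0,\prod b_i)$, which by the observation above fails to be idempotent because $\prod b_i$ is not idempotent in $\mathbb{Z}/m\mathbb{Z}$. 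Pulling back through CRT yields a non-idempotent-product sequence of the same length in $\mathbb{Z}/2m\mathbb{Z}$, giving the inequality.

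There is no real obstacle here beyond identifying the correct CRT reduction; the proof is essentially a two-line verification once one notices that $\mathbb{Z}/2\mathbb{Z}$ consists entirely of idempotents. The only point requiring the slightest care is ensuring that the CRT isomorphism respects the multiplicative semigroup structure (which it does, since CRT is a ring isomorphism), and that ``idempotent'' is preserved componentwise — both of which follow at once.
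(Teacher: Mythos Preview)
Your proposal is correct and follows essentially the same approach as the paper: both directions proceed by projecting/lifting along the Chinese Remainder Theorem decomposition and using the fact that every element of $\mathbb{Z}/2\mathbb{Z}$ is idempotent. The only cosmetic difference is that the paper phrases the argument in terms of residues modulo $m$ and modulo the prime powers dividing $2m$ rather than the explicit product $\mathbb{Z}/2\mathbb{Z}\times\mathbb{Z}/m\mathbb{Z}$, and uses an arbitrary lift rather than your specific choice $(0,b_i)$, but the content is identical.
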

In particular, this implies that if an odd integer $m>1$ satisfies Conjecture~\ref{conj:equality} then so does $2m$. Thus Conjecture~\ref{conj:equality} holds for $n$ twice a prime power, using Theorem~\ref{thm:lower}. In Section $4$, we confirm Conjecture~\ref{conj:equality} for the case where exactly two distinct primes appear in the prime factorization of $n$. This is our main result.

\begin{restatable}{theorem}{thmtwoprimes}\label{thm:twoprimes}
Let $n = p^kq^{\ell}$, where $p$ and $q$ are distinct primes and $k$ and $\ell$ are positive integers. Then $$I_r(\mathbb{Z}/n\mathbb{Z}) = \mathsf{D}((\mathbb{Z}/n\mathbb{Z})^\times) + (k - 1) + (\ell - 1).$$
\end{restatable}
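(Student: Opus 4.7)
The lower bound is Theorem~\ref{thm:lower}, so the task reduces to establishing the matching upper bound
\[
I_r(\mathbb{Z}/n\mathbb{Z}) \le \mathsf{D}((\mathbb{Z}/n\mathbb{Z})^\times) + (k-1) + (\ell-1).
\]
I would identify $\mathbb{Z}/n\mathbb{Z}$ with $\mathbb{Z}/p^k\mathbb{Z} \times \mathbb{Z}/q^\ell\mathbb{Z}$ via CRT and write a generic length-$N$ sequence as $(a_i,b_i)_{i=1}^N$, where $N = \mathsf{D}((\mathbb{Z}/n\mathbb{Z})^\times) + (k-1) + (\ell-1)$; set $v_i = v_p(a_i)$ and $w_i = v_q(b_i)$. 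The four idempotents of $\mathbb{Z}/n\mathbb{Z}$ are $(0,0)$, $(0,1)$, $(1,0)$, and $(1,1)$, and a subsequence $T$ has idempotent product exactly when, in each coordinate, either the valuation sum over $T$ reaches the exponent (forcing a $0$) or every factor in that coordinate is a unit and the product equals $1$.

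The first step is a top-level dichotomy on total valuations. If $\sum_i v_i \ge k$ and $\sum_i w_i \ge \ell$, the full sequence multiplies to $(0,0)$; otherwise, by the $p \leftrightarrow q$ symmetry, assume $\sum_i v_i \le k-1$, so at most $k-1$ indices satisfy $v_i \ge 1$ and a $p$-unit subset $W$ of size at least $\mathsf{D}((\mathbb{Z}/n\mathbb{Z})^\times) + \ell - 1$ remains. Within $W$, if at most $\ell - 1$ indices satisfy $w_i \ge 1$, then $W$ contains at least $\mathsf{D}((\mathbb{Z}/n\mathbb{Z})^\times)$ elements of $(\mathbb{Z}/n\mathbb{Z})^\times$, and the Davenport constant produces a subsequence with product $(1,1)$.

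The substantive case is when $W_1 := \{i \in W : w_i \ge 1\}$ has at least $\ell$ elements, and the target idempotent becomes $(1,0)$. My plan is to pick a minimal seed $F \subseteq W_1$ with $\sum_{i \in F} w_i \ge \ell$ (necessarily $|F| \le \ell$, since all $w_i \geq 1$ on $F$), ensuring that $\prod_{i \in T} b_i \equiv 0 \pmod{q^\ell}$ for every $T \supseteq F$, and then to extend $F$ by a (possibly empty) subset $T' \subseteq W \setminus F$ whose $a$-product equals $(\prod_{i \in F} a_i)^{-1}$ in $(\mathbb{Z}/p^k\mathbb{Z})^\times$; taking $T = F \cup T'$ then yields $\prod_{i \in T} x_i = (1,0)$.

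The main obstacle is the tight regime $|F| = \ell$, in which $|W \setminus F|$ can be as small as $\mathsf{D}((\mathbb{Z}/n\mathbb{Z})^\times) - 1$---one short of the length that would trivially force a subsequence with a prescribed product in a group. I expect the resolution to exploit the flexibility in the seed choice (as $F$ varies over size-$\ell$ subsets of $W_1$, the targets $(\prod_{i \in F} a_i)^{-1}$ sweep out many values of $(\mathbb{Z}/p^k\mathbb{Z})^\times$) combined with iterative extraction of zero-sum subsequences with respect to the $a$-coordinate: the union of enough disjoint $a$-zero-sums inside $W$ is itself an $a$-zero-sum and will meet $W_1$ in at least $\ell$ elements. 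Equivalently, the remaining task can be recast as computing the Erd\H{o}s-Burgess constant of the hybrid semigroup $(\mathbb{Z}/p^k\mathbb{Z})^\times \times \mathbb{Z}/q^\ell\mathbb{Z}$, and I would conclude by an induction on $\ell$ whose base case $\ell=1$ is handled by a direct argument specialized to $n = p^k q$.
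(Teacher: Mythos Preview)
Your reduction to the ``substantive case'' is correct and matches the paper's setup (with the roles of $p$ and $q$ swapped): after the easy dichotomies one is left with a set $W$ of $p$-units of size at least $N := \mathsf{D}((\mathbb{Z}/n\mathbb{Z})^\times)+\ell-1$ containing at least $\ell$ multiples of $q$, and one must produce a subsequence with product $(1,0)$. The gap is that you do not actually close this case; each of the three mechanisms you propose fails or is unsubstantiated.
\begin{itemize}
\item \emph{Seed flexibility.} If every element of $W_1$ has the same $a$-coordinate $\alpha\in(\mathbb{Z}/p^k\mathbb{Z})^\times$, then every size-$\ell$ seed $F$ gives the \emph{same} target $\alpha^{-\ell}$, so varying $F$ buys nothing.
\item \emph{Iterative extraction of $a$-zero-sums.} An $a$-zero-sum drawn entirely from $W\setminus W_1$ has product $(1,u)$ with $u\in(\mathbb{Z}/q^\ell\mathbb{Z})^\times$, which is not idempotent. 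Nothing forces such zero-sums to hit $W_1$: in the tight case $|W_1|=\ell$ one has $|W\setminus W_1|=\mathsf{D}((\mathbb{Z}/n\mathbb{Z})^\times)-1$, and since $\mathsf{D}((\mathbb{Z}/p^k\mathbb{Z})^\times)=p^{k-1}(p-1)$ is typically much smaller, you can keep extracting $a$-zero-sums from $W\setminus W_1$ indefinitely without ever touching $W_1$.
\item \emph{Induction on $\ell$.} The base case $\ell=1$ is already the full difficulty: after Lemma~\ref{lem:reduction} strips the factor~$q$, you still face $k+t$ multiples of $p$ and $\mathsf{D}((\mathbb{Z}/n\mathbb{Z})^\times)-t-1$ units and must manufacture a product $\equiv 1\pmod q$ using at least $k$ of the former. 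No ``direct argument'' for this is given, and none is apparent without the machinery below.
\end{itemize}

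The paper's resolution is structurally different. Working modulo $q^\ell$ in $G=(\mathbb{Z}/q^\ell\mathbb{Z})^\times$, it lets $P_2=\prod_{\ge 0}(B)\bmod q^\ell$ for the unit block $B$ and passes to the quotient $G/H$ by the stabilizer $H=\Stab_G(P_2)$. Two counting lemmas then apply: Lemma~\ref{lem:k-fold} gives $|\prod_{\ge k}(A')|\ge t+1$ in $G/H$, and a Kneser-type stabilizer bound (Lemma~\ref{lem:stabilizer-bound}) gives $|\prod_{\ge 0}(B')|\ge g+1$, where $g$ counts the $b_i$ landing outside $H$. If $(t{+}1)+(g{+}1)>|G/H|$ a pigeonhole in $G/H$ produces the desired inverse pair. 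Otherwise the $\mathsf{D}((\mathbb{Z}/n\mathbb{Z})^\times)-t-1-g$ remaining units lie in the subgroup $C_{p^{k-1}(p-1)}\times H$, and the proof finishes by invoking Olson's exact formula $\mathsf{D}(C_a\times C_b)=\gcd(a,b)+\lcm(a,b)-1$ for rank-$2$ groups together with the elementary inequality
\[
\bigl(\gcd(a,c)+\lcm(a,c)\bigr)-\bigl(\gcd(a,b)+\lcm(a,b)\bigr)\ \ge\ \tfrac{c}{b}-1\qquad (b\mid c),
\]
with $a=p^{k-1}(p-1)$, $b=|H|$, $c=q^{\ell-1}(q-1)$. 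These three ingredients---the stabilizer quotient, Olson's rank-$2$ formula, and the $\gcd$/$\lcm$ inequality---are exactly what your proposal is missing, and some substitute for them is needed to complete the argument.
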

Taken together, the previous two results confirm Conjecture~\ref{conj:equality} for $n = 2p^kq^{\ell}$, where $p$ and $q$ are distinct odd primes.
\\

In Section $5$, we generalize Theorem~\ref{thm:lower} to both unique factorization domains and Dedekind domains, which are the rings with a notion of unique prime factorization of elements and ideals, respectively.  In Section $6$, we make some concluding remarks and pose a few questions for future research.

\section{An Upper Bound When Only One Prime Is Repeated}\label{sec:upper}
Before we prove Theorem~\ref{thm:squarefree}, we choose some notation.
\begin{definition}\label{subset-product}
Given a sequence $S$ over a multiplicative semigroup, let $\prod_{\ge k}(S)$ denote the set of all products of at least $k$ elements of $S$.  In other words, $\prod_{\ge k}(S)$ is the set of elements that appear as the product of the elements of some subsequence $T$ in $S$ of length at least $k$.  By convention, let $1 \in \prod_{\geq 0}(S)$ in all cases.
\end{definition}
The following lemma will be useful in both this and the following sections.
\begin{lemma}\label{lem:k-fold}
Let $S=a_1,\ldots,a_{k+t}$ be a sequence over an abelian group $(G,\times)$ of length $k+t$ for some integers $k>0$ and $t\geq 0$.  Let $P = \prod_{\ge k}(S)$. Then either $1\in P$ or $|P|\ge t + 1$.
\end{lemma}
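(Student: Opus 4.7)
The plan is to induct on the length $k+t$ of the sequence $S$. In the base case $k+t = 1$ we must have $k=1$ and $t=0$, so $P = \{a_1\}$ and the claim is immediate.

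For the inductive step, I would consider the $t+1$ partial products
$$\pi_j = a_1 a_2 \cdots a_j \qquad \text{for } j = k, k+1, \ldots, k+t.$$
Each $\pi_j$ is the product of at least $k$ terms of $S$ and therefore lies in $P$. If these $t+1$ elements are pairwise distinct, then $|P| \geq t+1$ and we are done.

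Otherwise, since $G$ is a group, a collision $\pi_i = \pi_j$ with $k \leq i < j \leq k+t$ yields
$$a_{i+1} a_{i+2} \cdots a_j = 1,$$
i.e.\ a contiguous block of $S$ of length $\ell := j-i \in [1,t]$ whose product is $1$. If $\ell \geq k$ then this block itself witnesses $1 \in P$ and we are done. If instead $\ell < k$, let $S'$ be the sequence obtained from $S$ by deleting these $\ell$ terms; it has length $k + t - \ell = k' + t$ with $k' := k - \ell \geq 1$ and $k' + t < k+t$, so the inductive hypothesis applies to $S'$ with parameters $k'$ and $t$. The key observation is that any subsequence of $S'$ of length at least $k'$ can be concatenated with the deleted block (of product $1$ and length $\ell$) to produce a subsequence of $S$ of length at least $k' + \ell = k$ with the same product; hence $\prod_{\geq k'}(S') \subseteq P$. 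The inductive hypothesis then gives either $1 \in \prod_{\geq k'}(S') \subseteq P$ or $|P| \geq |\prod_{\geq k'}(S')| \geq t+1$, as desired.

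The main obstacle is conceptual rather than technical: one has to notice that the partial-product pigeonhole produces a contiguous block of product $1$ of length at most $t$, and that deleting this block decreases $k$ by exactly its length, so that the inductive hypothesis applies with matching parameters and the absorbed block converts products of length $\geq k'$ in $S'$ into products of length $\geq k$ in $S$.
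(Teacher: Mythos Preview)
Your proof is correct and follows essentially the same approach as the paper: both use the prefix-product pigeonhole to extract a contiguous block with product $1$, then accumulate such blocks until their total length reaches $k$. The only difference is packaging---you phrase the accumulation as an induction on the length (deleting the block and reducing $k$ to $k'=k-\ell$, then using $\prod_{\ge k'}(S')\subseteq P$), whereas the paper phrases it as an explicit iterative reordering; your version is arguably tidier but not a different idea.
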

\begin{proof}
The statement $|P|\geq 1$ is trivially true for $t=0$, so we restrict our attention to the case $t\geq 1$.  Suppose $|P|\le t$. We will show that this implies $1\in P$.  Consider the $t + 1$ products $\prod_{i = 1}^{k + j} a_i$ for $0\le j\le t$. (By definition, these are all in $P$.)  The Pigeonhole Principle tells us that some two of these products are equal, so there exist integers $0\le c< d\le t$ such that $\prod_{i=1}^{k+c}a_i=\prod_{i=1}^{k+d}a_i$ and hence $\prod_{i=k+c+1}^{k+d}a_i=1$.
\\

Now, we re-order the elements of $S$ to obtain the sequence $S'=a'_1,\ldots, a'_{k+t}$, where
$$a'_i=
\begin{cases}
a_{k+c+i}, &1\leq i \leq d-c\\
a_{i-(d-c)}, & d-c+1\leq i \leq k+d\\
a_i, &k+d+1\leq i \leq k+t.
\end{cases}$$
In other words, we have moved the $1$-product subsequence of length $d-c$ to the beginning of our sequence and shifted the displaced elements to the right.  If $d-c\geq k$, then we are done.  Otherwise, we can repeat the process described above, which gives us a new $1$-produce subsequence of length $d'-c'$ in front of the $1$-product subsequence of length $d-c$.  Once again, we are done if $(d-c)+(d'-c')\geq k$ because these elements have product $1$.  Otherwise, we continue iterating this process until our $1$-product subsequences have total length at least $k$, which shows that $1 \in P$, as desired. The process must terminate because the $1$-product prefix of our sequence gets strictly longer at each iteration.
\end{proof}
The following adaptation of the methods of \cite{hao2018erd, hao2018modular} allows us to restrict our attention to sequences that do not contain certain elements.
\begin{lemma}\label{lem:reduction}
Suppose $n=p_1p_2\cdots p_r m$, where the $p_i$'s are distinct primes that do not divide $m$, and let $(\mathbb{Z}/n\mathbb{Z})^{\ast}$ denote the set of elements of $\mathbb{Z}/n\mathbb{Z}$ that are relatively prime to all of $p_1, \ldots, p_r$.  If every sequence of length $t$ over $(\mathbb{Z}/n\mathbb{Z})^{\ast}$ contains a nonempty subsequence whose elements multiply to an idempotent element of $\mathbb{Z}/n\mathbb{Z}$, then every sequence of length $t$ over $\mathbb{Z}/n\mathbb{Z}$ also contains a nonempty subsequence whose elements multiply to an idempotent element of $\mathbb{Z}/n\mathbb{Z}$.
\end{lemma}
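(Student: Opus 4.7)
The plan is to exploit the Chinese Remainder Theorem decomposition
\[
\mathbb{Z}/n\mathbb{Z} \;\cong\; \mathbb{Z}/p_1\mathbb{Z}\times\cdots\times\mathbb{Z}/p_r\mathbb{Z}\times\mathbb{Z}/m\mathbb{Z}.
\]
Under this identification, an element of $\mathbb{Z}/n\mathbb{Z}$ lies in $(\mathbb{Z}/n\mathbb{Z})^{\ast}$ precisely when each of its first $r$ coordinates is nonzero, and it is idempotent in $\mathbb{Z}/n\mathbb{Z}$ precisely when each of its first $r$ coordinates lies in $\{0,1\}$ and its $\mathbb{Z}/m\mathbb{Z}$-coordinate is idempotent in $\mathbb{Z}/m\mathbb{Z}$.

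Given a sequence $S=a_1,\ldots,a_t$ over $\mathbb{Z}/n\mathbb{Z}$, I would build a companion sequence $S'=a_1',\ldots,a_t'$ over $(\mathbb{Z}/n\mathbb{Z})^{\ast}$ by ``filling in'' each zero coordinate with a $1$: if the CRT decomposition of $a_i$ is $(x_{i,1},\ldots,x_{i,r},z_i)$, let $a_i'$ be the element with CRT decomposition $(x_{i,1}',\ldots,x_{i,r}',z_i)$, where $x_{i,j}'=x_{i,j}$ whenever $x_{i,j}\neq 0$ in $\mathbb{Z}/p_j\mathbb{Z}$ and $x_{i,j}'=1$ otherwise. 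By construction each $a_i'\in(\mathbb{Z}/n\mathbb{Z})^{\ast}$, and $a_i$ and $a_i'$ agree in the $\mathbb{Z}/m\mathbb{Z}$-coordinate.

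Next I would apply the hypothesis to the length-$t$ sequence $S'$, obtaining a nonempty index set $T\subseteq\{1,\ldots,t\}$ such that $\prod_{i\in T}a_i'$ is idempotent in $\mathbb{Z}/n\mathbb{Z}$. Since this product lies in $(\mathbb{Z}/n\mathbb{Z})^{\ast}$, each of its $p_j$-coordinates is a unit in $\{0,1\}$ and hence must equal $1$. The final step is to check that the \emph{same} index set $T$ works for the original sequence $S$: for each $j$, either every $x_{i,j}$ with $i\in T$ is nonzero, in which case $x_{i,j}'=x_{i,j}$ and the $p_j$-coordinate of $\prod_{i\in T}a_i$ matches that of $\prod_{i\in T}a_i'$, namely $1$, or some factor vanishes mod $p_j$ and the coordinate equals $0$. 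In either case the coordinate lies in $\{0,1\}$, while the $\mathbb{Z}/m\mathbb{Z}$-coordinate agrees with that of $\prod_{i\in T}a_i'$ and is therefore idempotent in $\mathbb{Z}/m\mathbb{Z}$. Hence $\prod_{i\in T}a_i$ is idempotent in $\mathbb{Z}/n\mathbb{Z}$.

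I do not anticipate any real obstacle in executing this plan; the entire content of the argument is the construction of $S'$ together with the observation that demoting a $1$ to a $0$ in any $p_j$-coordinate preserves idempotence in $\mathbb{Z}/n\mathbb{Z}$, which is exactly why introducing fictitious $1$'s in place of true $0$'s costs nothing when pulling a subsequence back from $S'$ to $S$.
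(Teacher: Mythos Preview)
Your proposal is correct and follows essentially the same approach as the paper's proof: both construct the companion sequence $S'$ by replacing, via the Chinese Remainder Theorem, each zero residue modulo $p_j$ with a $1$ while leaving the residue modulo $m$ unchanged, and both verify that the idempotent-producing index set for $S'$ also works for $S$ by the ``$0$ or $1$ modulo each $p_j$'' dichotomy you describe. The only cosmetic difference is that the paper phrases the argument as a proof by contradiction and uses congruence language rather than explicit CRT coordinates.
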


\begin{proof}
Assume for the sake of contradiction that there exists a sequence $S=a_1,\ldots,a_t$ over $\mathbb{Z}/n\mathbb{Z}$ such that there is no nonempty subsequence of $S$ whose elements multiply to an idempotent element of $\mathbb{Z}/n\mathbb{Z}$. 
For each $a_j$, let $a'_j$ be the unique element of $\mathbb{Z}/n\mathbb{Z}$ that is equivalent to $1\pmod{p_i}$ if $p_i$ divides $a_j$ and $a_j\pmod{p_i}$ otherwise for each $1\leq i \leq r$ and that is also equivalent to $a_j$ modulo $m$. Such a unique element exists by the Chinese Remainder Theorem. Thus, $S'=a'_1,\ldots,a'_t$ is a sequence of length $t$ over $(\mathbb{Z}/n\mathbb{Z})^{\ast}$.  By assumption, $S'$ contains a nonempty subsequence $T'=a'_{j_1},\ldots,a'_{j_{\ell}}$ such that the idempotent product $a'_{j_1}\cdots a'_{j_{\ell}}$ is equivalent to either $0$ or $1$ modulo each prime power dividing $n$.
\\

Consider the product $a_{j_1}\cdots a_{j_{\ell}}$ (which appears as a subsequence $T$ of $S$).  Since $a_j \equiv a'_j \pmod{m}$ for all $j$, it follows that $a_{j_1}\cdots a_{j_{\ell}}$ is still equivalent to $0$ or $1$ modulo each prime power dividing $m$.  We also know that $a'_{j_1}\cdots a'_{j_{\ell}}$ is equivalent to $1$ modulo each $p_i$.  If no $a_{j_k}$ is divisible by $p_i$, then each $a_{j_k} \equiv a'_{j_k} \pmod{p_i}$, and we can conclude that $a_{j_1}\cdots a_{j_{\ell}}$ is equivalent to $1$ modulo $p_i$.  If any $a_{j_k}$ is divisible by $p_i$, then the product $a_{j_1}\cdots a_{j_{\ell}}$ is equivalent to $0$ modulo $p_i$.  So, in both cases, $a_{j_1}\cdots a_{j_{\ell}}$ is equivalent to $0$ or $1$ modulo each prime power dividing $n$, and in fact $a_{j_1}\cdots a_{j_{\ell}}$ is an idempotent element of $\mathbb{Z}/n\mathbb{Z}$.  This yields the desired contradiction.
\end{proof}
Lemma~\ref{lem:reduction} tells us that if we want to establish some $t$ as an upper bound for $I_r(\mathbb{Z}/n\mathbb{Z})$ (with $n$ as in the lemma), it suffices to show that every sequence of length $t$ over $(\mathbb{Z}/n\mathbb{Z})^{\ast}$ contains a nonempty subsequence whose elements multiply to an idempotent element.  In other words, we don't have to worry about sequences containing elements divisible by any of the $p_i$'s.  (The same is not true for primes that divide $n$ multiple times.)  We now prove Theorem~\ref{thm:squarefree}.
\thmsquarefree*
\begin{proof}
Let $(\mathbb{Z}/n\mathbb{Z})^{\ast}$ denote the set of elements of $\mathbb{Z}/n\mathbb{Z}$ that are relatively prime to $s$.  We will show that any sequence of length $N = \mathsf{D}((\mathbb{Z}/n\mathbb{Z})^{\times})+(k-1)+(\phi(s)-1)$ over $(\mathbb{Z}/n\mathbb{Z})^{\ast}$ contains a nonempty subsequence whose elements multiply to an idempotent element of $\mathbb{Z}/n\mathbb{Z}$.  By Lemma~\ref{lem:reduction}, this will be sufficient to establish the result.
\\

Let $S=a_1,\ldots,a_N$ be a sequence over $(\mathbb{Z}/n\mathbb{Z})^{\ast}$ where, without loss of generality, exactly the first $t$ elements are divisible by $p$.  We note that the remaining $\mathsf{D}((\mathbb{Z}/n\mathbb{Z})^{\times})+(k-1)+(\phi(s)-1)-t$ elements are all units of $\mathbb{Z}/n\mathbb{Z}$.  If $t\leq (k-1)+(\phi(s)-1)$, then $S$ contains at least $\mathsf{D}((\mathbb{Z}/n\mathbb{Z})^{\times})$ units.  By the definition of the Davenport constant, this guarantees the existence of a nonempty subsequence of $S$ whose elements multiply to $1$, which is certainly idempotent.  If $t>(k-1)+(\phi(s)-1)$, then we will find a subsequence of $a_1,\ldots, a_t$ of length at least $k$ whose product is equivalent to $1$ modulo $s$. Such a product is idempotent: it is automatically divisible by $p^k$ because these $a_i$'s are all divisible by $p$. Consider  the sequence $a'_1,\ldots,a'_t$ over $(\mathbb{Z}/s\mathbb{Z})^{\times}$ that is obtained by reducing each $a_i$ modulo $s$.  Lemma~\ref{lem:reduction} tells us that either $1 \in \prod_{\geq k}(a'_1,\ldots,a'_t)$ (in which case we are done) or $\left|\prod_{\geq k}(a'_1,\ldots,a'_t)\right|\geq t-k+1\geq \phi(s)$.  In the latter case, $\prod_{\geq k}(a'_1,\ldots,a'_t)$ is the entire group $(\mathbb{Z}/s\mathbb{Z})^{\times}$ (since $|(\mathbb{Z}/s\mathbb{Z})^{\times}|=\phi(s))$, and hence $1 \in \prod_{\geq k}(a'_1,\ldots,a'_t)$.  So, in all cases, $S$ contains a nonempty subsequence whose product is an idempotent element of $\mathbb{Z}/n\mathbb{Z}$, and we can conclude that $\mathsf{D}((\mathbb{Z}/n\mathbb{Z})^{\times})+(k-1)+(\phi(s)-1)$ is in fact an upper bound for $I_r(\mathbb{Z}/n\mathbb{Z})$.
\end{proof}
As mentioned in Section~\ref{sec:introduction}, the upper bound in this lemma is $\phi(s)-1$ greater than the conjectured actual value of $I_r(\mathbb{Z}/n\mathbb{Z})$.

\section{The Case $n=2m$ for Odd $m$}\label{sec:twom}
This short section is devoted to proving Theorem~\ref{thm:twom} and discussing its ramifications for Conjecture~\ref{conj:equality}.
\thmtwom*
\begin{proof}
Let $N = I_r(\mathbb{Z}/m\mathbb{Z})$. First, assume for the sake of contradiction that $I_r(\mathbb{Z}/2m\mathbb{Z})<I_r(\mathbb{Z}/m\mathbb{Z})$.  By definition, there exists a sequence $S=a_1,\ldots,a_{N-1}$ over $\mathbb{Z}/m\mathbb{Z}$ of length $I_r(\mathbb{Z}/m\mathbb{Z})-1$ such that there is no nonempty subsequence of $S$ whose elements multiply to an idempotent element of $\mathbb{Z}/m\mathbb{Z}$.  Consider the sequence $S'=a'_1,\ldots,a'_{N-1}$ over $\mathbb{Z}/2m\mathbb{Z}$, where each $a'_i$ is equivalent to $a_i$ modulo $m$ and $0\leq a_i \leq m-1$.  (As usual, these elements exist by the Chinese Remainder Theorem.)  By assumption, $S'$ contains a nonempty subsequence $T'=b'_1,\ldots,b'_{\ell}$ whose product $x'$ is idempotent in $\mathbb{Z}/2m\mathbb{Z}$.  Then $x'$ is equivalent to either $0$ or $1$ modulo each prime power dividing $2m$.  Consider the corresponding subsequence $T=b_1,\ldots,b_{\ell}$ of $S$ with product $x$.  Because each $a'_i \equiv a_i \pmod{m}$, we have $x \equiv x' \pmod{m}$.  Hence, $x$ remains equivalent to either $0$ or $1$ modulo each prime power dividing $m$, which means that $x$ is idempotent in $\mathbb{Z}/m\mathbb{Z}$.  This yields a contradiction, so in fact $I_r(\mathbb{Z}/2m\mathbb{Z}) \geq I_r(\mathbb{Z}/m\mathbb{Z})$.
\\

Second, assume (again for the sake of contradiction) that $I_r(\mathbb{Z}/2m\mathbb{Z})>I_r(\mathbb{Z}/m\mathbb{Z})$.  Then there exists a sequence $S=a_1,\ldots,a_N$ over $\mathbb{Z}/2m\mathbb{Z}$ of length $I_r(\mathbb{Z}/m\mathbb{Z})$ such that there is no nonempty subsequence of $S$ whose elements multiply to an idempotent element of $\mathbb{Z}/2m\mathbb{Z}$.  Consider the sequence $S'=a'_1,\ldots,a'_N$ over $\mathbb{Z}/m\mathbb{Z}$ where each $a'_i$ is equivalent to $a_i$ modulo $m$.  But $S'$ must contain some nonempty subsequence $T'=b'_1,\ldots,b'_{\ell}$ whose product $x'$ is idempotent in $\mathbb{Z}/n \mathbb{Z}$.  By the same reasoning as above, the corresponding subsequence $T=b_1,\ldots,b_{\ell}$ of $S$ with product $x$ satisfies $x \equiv x' \pmod{m}$.  Hence, $x$ remains equivalent to either $0$ or $1$ modulo each prime power dividing $m$, and, furthermore, $x$ is trivially equivalent to either $0$ or $1$ modulo $2$.  This means that $x$ is idempotent in $\mathbb{Z}/2m \mathbb{Z}$, which yields a contradiction.  So we conclude that $I_r(\mathbb{Z}/2m\mathbb{Z})=I_r(\mathbb{Z}/m\mathbb{Z})$.
\end{proof}
The following consequence of this result holds particular interest.
\begin{corollary}\label{cor:twom}
For any odd integer $m>1$, let $c_m$ be the integer such that $I_r(\mathbb{Z}/m \mathbb{Z})=\mathsf{D}((\mathbb{Z}/m \mathbb{Z})^{\times})+\Omega(m)-\omega(m)+c_m$.  Then we also have
$$I_r(\mathbb{Z}/2m \mathbb{Z})=\mathsf{D}((\mathbb{Z}/2m \mathbb{Z})^{\times})+\Omega(2m)-\omega(2m)+c_m.$$
\end{corollary}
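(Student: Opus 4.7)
The plan is to combine Theorem~\ref{thm:twom} with two elementary arithmetic observations about how the quantities on the right-hand side change when we pass from $m$ to $2m$.

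First I would observe that since $m$ is odd, the prime $2$ is a new prime factor in the factorization of $2m$ with multiplicity one, so $\Omega(2m) = \Omega(m) + 1$ and $\omega(2m) = \omega(m) + 1$. Subtracting, $\Omega(2m) - \omega(2m) = \Omega(m) - \omega(m)$, so this term in the conjectural formula is unchanged.

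Second, I would invoke the Chinese Remainder Theorem, which yields $(\mathbb{Z}/2m\mathbb{Z})^\times \cong (\mathbb{Z}/2\mathbb{Z})^\times \times (\mathbb{Z}/m\mathbb{Z})^\times \cong (\mathbb{Z}/m\mathbb{Z})^\times$ since $(\mathbb{Z}/2\mathbb{Z})^\times$ is trivial. Hence $\mathsf{D}((\mathbb{Z}/2m\mathbb{Z})^\times) = \mathsf{D}((\mathbb{Z}/m\mathbb{Z})^\times)$.

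Combining these two observations with Theorem~\ref{thm:twom} (which gives $I_r(\mathbb{Z}/2m\mathbb{Z}) = I_r(\mathbb{Z}/m\mathbb{Z})$) and the defining equation for $c_m$, the identity for $I_r(\mathbb{Z}/2m\mathbb{Z})$ falls out in a single line of arithmetic. There is essentially no obstacle: the corollary is a translation of Theorem~\ref{thm:twom} into the language of Conjecture~\ref{conj:equality}, and the only content is that the Davenport constant and the excess $\Omega - \omega$ are both invariant under multiplication by $2$ when $m$ is odd.
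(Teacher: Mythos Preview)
Your proposal is correct and follows essentially the same argument as the paper: both use the Chinese Remainder Theorem to identify $(\mathbb{Z}/2m\mathbb{Z})^\times$ with $(\mathbb{Z}/m\mathbb{Z})^\times$, note that $\Omega(2m)-\omega(2m)=\Omega(m)-\omega(m)$ since $m$ is odd, and then invoke Theorem~\ref{thm:twom}. There is no substantive difference between your outline and the paper's proof.
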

\begin{proof}
Note that $$(\mathbb{Z}/2m \mathbb{Z})^{\times} \cong (\mathbb{Z}/2 \mathbb{Z})^{\times} \times (\mathbb{Z}/m \mathbb{Z})^{\times} \cong 1 \times (\mathbb{Z}/m \mathbb{Z})^{\times} \cong (\mathbb{Z}/m \mathbb{Z})^{\times}.$$  Hence, $\mathsf{D}((\mathbb{Z}/2m \mathbb{Z})^{\times})=\mathsf{D}((\mathbb{Z}/m \mathbb{Z})^{\times})$.  It is also clear that $\Omega(2m)-\omega(2m)=\Omega(m)-\omega(m)$ since $m$ is odd.  Combining these two equalities with Theorem~\ref{thm:twom} establishes the result.
\end{proof}
This corollary tells us that whenever an odd integer $m>1$ satisfies Conjecture~\ref{conj:equality} (i.e., $c_m=0$), $2m$ also satisfies Conjecture~\ref{conj:equality}. As such, we can immediately confirm Conjecture~\ref{conj:equality} for $n$ twice a prime power.
\begin{corollary}\label{cor:minor}
Let $n = 2p^k$, where $p$ is an odd prime and $k$ is a positive integer. Then
$$I_r(\mathbb{Z}/n\mathbb{Z}) = \mathsf{D}((\mathbb{Z}/n\mathbb{Z})^\times) + (k - 1).$$
\end{corollary}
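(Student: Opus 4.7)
The plan is to apply Corollary~\ref{cor:twom} with $m = p^k$, which collapses the problem to the prime-power case that is already handled by Theorem~\ref{thm:lower}.

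First I would verify the hypothesis of Corollary~\ref{cor:twom}: take $m = p^k$, which is an odd integer greater than $1$ since $p$ is an odd prime. Theorem~\ref{thm:lower} asserts that equality holds in the lower bound when $n$ is a prime power, so
\[
I_r(\mathbb{Z}/p^k\mathbb{Z}) = \mathsf{D}((\mathbb{Z}/p^k\mathbb{Z})^\times) + \Omega(p^k) - \omega(p^k) = \mathsf{D}((\mathbb{Z}/p^k\mathbb{Z})^\times) + (k-1).
\]
In the notation of Corollary~\ref{cor:twom}, this says $c_{p^k} = 0$.

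Next I would plug this into Corollary~\ref{cor:twom} with $m = p^k$, obtaining
\[
I_r(\mathbb{Z}/2p^k\mathbb{Z}) = \mathsf{D}((\mathbb{Z}/2p^k\mathbb{Z})^\times) + \Omega(2p^k) - \omega(2p^k).
\]
Since $p$ is odd, $\Omega(2p^k) - \omega(2p^k) = (k+1) - 2 = k-1$, which gives exactly the claimed identity with $n = 2p^k$.

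There is no genuine obstacle here: the content was already packaged in Corollary~\ref{cor:twom}, and the only ingredient to supply is the observation that prime powers satisfy Conjecture~\ref{conj:equality} by Theorem~\ref{thm:lower}. The whole proof is essentially one line of substitution.
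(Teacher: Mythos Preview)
Your argument is correct and is exactly the paper's approach: the paper's proof simply reads ``This follows immediately from Theorem~\ref{thm:lower} and Corollary~\ref{cor:twom},'' and you have merely unpacked that one line by noting $c_{p^k}=0$ and computing $\Omega(2p^k)-\omega(2p^k)=k-1$.
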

\begin{proof}
This follows immediately from Theorem~\ref{thm:lower} and Corollary~\ref{cor:twom}.
\end{proof}

\section{The Cases $n=p^k q^{\ell}$ and $n=2 p^k q^{\ell}$}\label{sec:twoprimes}
In this section, we prove Theorem~\ref{thm:twoprimes} and an immediate corollary for the case $n=2p^k q^{\ell}$. As usual, we begin with some notation.
\begin{definition}\label{def:set-shift}
Given a sequence $S=a_1,\ldots,a_k$ over a multiplicative semigroup and any element $x$ of the semigroup, let $xS$ denote the sequence $a'_1,\ldots,a'_k$ where each $a'_i=xa_i$.  When we speak of the elements of $S$ as a set (respectively, multiset), the set (multiset) $xS$ is defined in the same fashion.
\end{definition}
We require a lemma on the structure of subset products in abelian groups.
\begin{lemma}[Stabilizer Bound]\label{lem:stabilizer-bound}
Let $S=a_1,\ldots,a_{|S|}$ be a sequence of non-identity elements over an abelian group $(G, \times)$, and let $P = \prod_{\ge 0}(S)$.  If the stabilizer subgroup $\Stab_G(P) = \{x\in G: xP = P\}$ contains only the identity, then $|P|\geq |S|+1$.
\end{lemma}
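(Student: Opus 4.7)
The plan is to filter the sequence one element at a time and track how the partial subset-product set grows. Define $P_0 = \{1\}$ and $P_k = \prod_{\ge 0}(a_1, \ldots, a_k)$ for $1 \le k \le |S|$, so that $P = P_{|S|}$. The recursive identity I will exploit is
\[
P_k = P_{k-1} \cup a_k P_{k-1},
\]
which follows because a subsequence of $a_1,\ldots,a_k$ either contains $a_k$ or does not.

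The goal will be to show that $|P_k| \geq |P_{k-1}| + 1$ for every $1 \le k \le |S|$; combined with $|P_0| = 1$, this yields $|P| \ge |S|+1$. Suppose toward contradiction that some step fails, i.e.\ $|P_k| = |P_{k-1}|$. Then $a_k P_{k-1} \subseteq P_{k-1}$, and since multiplication by $a_k$ is a bijection of $G$ and $P_{k-1}$ is finite, this forces $a_k P_{k-1} = P_{k-1}$, meaning $a_k \in \Stab_G(P_{k-1})$.

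The key step is to propagate this stabilizer membership forward: I will prove by induction on $j \ge k-1$ that $a_k \in \Stab_G(P_j)$. Given $a_k P_j = P_j$, the recursive identity gives
\[
a_k P_{j+1} = a_k P_j \cup a_k a_{j+1} P_j = P_j \cup a_{j+1}(a_k P_j) = P_j \cup a_{j+1} P_j = P_{j+1},
\]
using commutativity of $G$. Taking $j = |S|$ yields $a_k \in \Stab_G(P)$. But $a_k$ is a non-identity element by hypothesis, contradicting $\Stab_G(P) = \{e\}$. Hence every step of the filtration strictly increases the size, and the bound follows.

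There is no serious obstacle here: the whole argument rests on the two elementary observations that (i) a containment of equal-size finite sets is equality, and (ii) stabilizer membership is preserved when one adjoins new factors, which is immediate from commutativity. The only thing to be careful about is ensuring that the contradiction uses $a_k \neq e$, which is exactly why the hypothesis that the $a_i$'s are non-identity elements is needed.
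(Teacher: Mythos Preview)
Your argument is correct and follows essentially the same approach as the paper's proof: both define the partial subset-product sets $P_k$, use the recursion $P_k = P_{k-1}\cup a_kP_{k-1}$, argue that a failure of strict growth forces $a_k\in\Stab_G(P_{k-1})$, and then propagate stabilizer membership forward via commutativity to reach a contradiction with $\Stab_G(P)=\{1\}$. The only cosmetic difference is that the paper phrases the propagation as the inclusion $\Stab_G(P_j)\subseteq\Stab_G(P_{j+1})$ rather than tracking the single element $a_k$.
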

\begin{proof}
Let $P_i=\prod_{\geq 0}(a_1,\ldots,a_i)$ for each $1\leq i \leq |S|$, so that $P_1=\{1, a_1\}$ and $P_{|S|}=P$.  (Note that $|P_1|=2$ since $a_1 \neq 1$.)  Clearly, each $P_i \subseteq P_{i+1}$.  We will show that this containment is proper, which in turn implies that $|P_i|\geq i+1$ for all $i$.\\

Assume for the sake of contradiction that $P_i=P_{i+1}$ for some $1\leq i \leq |S|-1$.  Writing $P_{i+1}=P_i\cup a_{i+1}P_i$, we see that $a_{i+1}P_i\subseteq P_i$.  Since $|P_i|=|a_{i+1}P_i|$, we must have $a_{i+1}P_i=P_i$, i.e., $a_{i+1} \in \Stab_G(P_i)$.  We claim that $\Stab_G(P_j)\subseteq \Stab_G(P_{j+1})$ for all $1\leq j \leq |S|-1$.  To see this, let $x\in \Stab_G(P_j)$.  Then $xP_j=P_j$ and $x(a_{j+1}P_j)=a_{j+1}P_j$, which implies that $xP_{j+1}=x(P_j\cup a_{j+1}P_j)=(xP_j)\cup (xa_{j+1}P_j)=P_j\cup a_{j+1}P_j=P_{j+1}$.  Thus, we have $a_{i+1}\in \Stab_G(P)$, but this contradicts $\Stab_G(P)$ consisting of only the identity.
\end{proof}
We will also use the following result of Olson \cite{olson1969combinatoriali, olson1969combinatorialii}.
\begin{theorem}[{\cite[Theorem~1.1]{olson1969combinatoriali, olson1969combinatorialii}}]\label{thm:olson}
For a finite abelian group $G=C_{n_1}\times \cdots \times C_{n_r}$, where each $n_i$ divides $n_{i+1}$, define $\mathsf{M}(G)=1+\sum_{i=1}^r (n_i-1)$.  Then $\mathsf{D}(G)\geq \mathsf{M}(G)$.  Moreover, equality holds whenever $r\leq 2$ or $|G|$ is a prime power.
\end{theorem}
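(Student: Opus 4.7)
The plan is to prove the lower bound $\mathsf{D}(G)\geq \mathsf{M}(G)$ by an explicit construction and then handle the two equality cases separately: the cyclic case by pigeonhole, the $p$-group case by a group algebra argument, and the rank-two case by Olson's inductive reduction.

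For the lower bound, writing $G=C_{n_1}\times\cdots\times C_{n_r}$ with $n_i\mid n_{i+1}$ and picking a generator $e_i$ of each factor, I would form the sequence that repeats each $e_i$ exactly $n_i-1$ times. This sequence has length $\mathsf{M}(G)-1$, and any nonempty subsequence sums to $\sum c_ie_i$ with $0\le c_i\le n_i-1$ not all zero, which is nonzero by independence of the cyclic factors. Hence $\mathsf{D}(G)\geq\mathsf{M}(G)$. For the cyclic case $r=1$, the reverse inequality $\mathsf{D}(C_n)\leq n$ follows from the pigeonhole principle applied to the $n+1$ partial sums of any length-$n$ sequence.

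For the $p$-group case, I would pass to the group algebra $R=\mathbb{F}_p[G]$. Fixing generators $g_1,\ldots,g_r$ of orders $p^{a_1},\ldots,p^{a_r}$ and setting $x_i=g_i-1$, one uses the identity $(g-1)^{p^a}=g^{p^a}-1=0$ in characteristic $p$ together with a binomial change-of-basis from $\{g_1^{c_1}\cdots g_r^{c_r}\}$ to show that $\{x_1^{b_1}\cdots x_r^{b_r}:0\leq b_i<p^{a_i}\}$ is an $\mathbb{F}_p$-basis of $R$. Since the largest total degree of a basis monomial is $\sum(p^{a_i}-1)=\mathsf{M}(G)-1$, the augmentation ideal $I=(x_1,\ldots,x_r)$ satisfies $I^{\mathsf{M}(G)}=0$. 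Then for any $h_1,\ldots,h_k\in G$ with $k=\mathsf{M}(G)$, the expansion
\[\prod_{i=1}^k(h_i-1)=\sum_{T\subseteq[k]}(-1)^{k-|T|}\prod_{i\in T}h_i\]
lies in $I^k=0$, so the coefficient of the identity element must vanish in $\mathbb{F}_p[G]$. But $T=\emptyset$ contributes $(-1)^k\neq 0$, so some nonempty $T$ must satisfy $\prod_{i\in T}h_i=e$, giving a zero-sum subsequence and the upper bound $\mathsf{D}(G)\leq\mathsf{M}(G)$.

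Finally, for the rank-two case $G=C_m\times C_{mn}$, I would induct on $|G|$ by choosing a suitable proper subgroup $H$ and analyzing the image sequence in $G/H$ together with the lift into $H$ of any zero-sum subsequence produced in $G/H$. The main obstacle is to make this reduction sharp enough to land on exactly $m+mn-1$ rather than a loose bound; Olson's trick is to localize to a $p$-primary component, apply the group algebra argument above there, and then use the divisibility $m\mid mn$ to glue the primary pieces consistently. I expect this to be by far the most delicate step, since in ranks three and above the analogous bound is famously not sharp in general, so any naive induction must fail in an instructive way as the rank grows.
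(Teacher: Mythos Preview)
The paper does not prove this theorem at all: it is quoted as a known result of Olson and used as a black box (only its specialization to rank two, via Corollary~\ref{cor:davenport}, is ever invoked). So there is no ``paper's own proof'' to compare against; your proposal is effectively a sketch of Olson's original arguments.

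On the substance of your sketch: the lower bound construction and the cyclic case are standard and correct, and your group-algebra argument for the $p$-group case is exactly Olson's proof from \cite{olson1969combinatoriali}. The one place where your outline is not yet a proof is the rank-two case. The idea you describe---pass to $p$-primary components, apply the $p$-group result there, and ``glue the primary pieces consistently''---does not work as stated, because the Davenport constant is not additive over Sylow decompositions: in general $\mathsf{D}(G)\le 1+\sum_p(\mathsf{D}(G_p)-1)$, but equality can fail, and indeed the failure of $\mathsf{D}=\mathsf{M}$ in higher rank arises precisely from this non-additivity. Olson's actual proof in \cite{olson1969combinatorialii} proceeds by a more intricate induction, peeling off one prime at a time from $C_m\times C_{mn}$ and carefully controlling how zero-sum subsequences in the quotient lift; it is not a corollary of the $p$-group case plus a localization argument. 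You correctly flag this step as delicate, but the mechanism you propose would, if it worked, also prove $\mathsf{D}=\mathsf{M}$ in all ranks, which is false.
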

We specialize to a case that will be useful in the proof of Theorem~\ref{thm:twoprimes}.
\begin{corollary}
\label{cor:davenport}
For any positive integers $a,b \geq 2$, we have
$$\mathsf{D}(C_a \times C_b)=(\gcd(a,b)-1)+(\lcm(a,b)-1)+1.$$
\end{corollary}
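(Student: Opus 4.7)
The plan is to reduce the corollary to a direct application of Olson's theorem (Theorem~\ref{thm:olson}) by rewriting $C_a\times C_b$ in invariant-factor form. Set $d=\gcd(a,b)$ and $m=\lcm(a,b)$. My first step would be to show that
$$C_a\times C_b \;\cong\; C_d\times C_m.$$
This follows from the structure theorem for finite abelian groups, applied one prime at a time: for each prime $p$, if $p^{\alpha}\|a$ and $p^{\beta}\|b$, then $C_{p^\alpha}\times C_{p^\beta}\cong C_{p^{\min(\alpha,\beta)}}\times C_{p^{\max(\alpha,\beta)}}$. Collecting the smaller prime powers into one factor and the larger ones into the other and using the Chinese Remainder Theorem (i.e.\ $C_{uv}\cong C_u\times C_v$ when $\gcd(u,v)=1$) produces exactly $C_d\times C_m$. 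Moreover $d\mid m$ by construction, so this is a legitimate invariant-factor decomposition.

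Next, I would invoke Olson's theorem. In the generic case $d\ge 2$, taking $r=2$, $n_1=d$, $n_2=m$ in Theorem~\ref{thm:olson} yields
$$\mathsf{D}(C_d\times C_m)\;=\;1+(d-1)+(m-1)\;=\;(\gcd(a,b)-1)+(\lcm(a,b)-1)+1,$$
which is the claimed identity. In the degenerate case $d=1$ (i.e.\ $\gcd(a,b)=1$), the group $C_a\times C_b$ is simply cyclic of order $m=ab$, and Theorem~\ref{thm:olson} with $r=1$ gives $\mathsf{D}(C_m)=m$; the stated formula $(1-1)+(m-1)+1=m$ agrees, so no separate argument is needed beyond observing this.

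There is no real obstacle here: all of the combinatorial work has been packaged into Olson's theorem, and the corollary is essentially a bookkeeping statement expressing the Davenport constant of a rank-at-most-two abelian group in terms of the original orders $a$ and $b$ rather than the invariant factors $d$ and $m$.
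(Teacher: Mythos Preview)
Your proposal is correct and follows essentially the same route as the paper: both reduce to Olson's theorem via the isomorphism $C_a\times C_b\cong C_{\gcd(a,b)}\times C_{\lcm(a,b)}$ together with the divisibility $\gcd(a,b)\mid\lcm(a,b)$. You have simply supplied more detail (the prime-by-prime justification of the isomorphism and the separate treatment of the $d=1$ case), whereas the paper states the isomorphism without further comment.
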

\begin{proof}
The corollary follows from noting that $C_a \times C_b \cong C_{\gcd(a,b)} \times C_{\lcm(a,b)}$ and $\gcd(a, b)$ divides $\lcm(a, b)$.
\end{proof}
Finally, we will need the following simple inequality.
\begin{proposition}\label{prop:gcd-ineq}
For any positive integers $a$, $b$, and $c$ such that $b$ divides $c$, we have
$$(\gcd(a,c)+\lcm(a,c))-(\gcd(a,b)+\lcm(a,b))\geq \frac{c}{b}-1.$$
\end{proposition}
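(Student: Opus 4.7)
My plan is to recast the inequality multiplicatively in terms of
$d_1 = \gcd(a,b)$, $m_1 = \lcm(a,b)$, $d_2 = \gcd(a,c)$, $m_2 = \lcm(a,c)$, and $q = c/b$. Since $b \mid c$, both $d_1 \mid d_2$ and $m_1 \mid m_2$, so $\mu := d_2/d_1$ and $\lambda := m_2/m_1$ are positive integers. The identity $\gcd(x,y)\lcm(x,y) = xy$ applied to both pairs $(a,b)$ and $(a,c)$ yields $\lambda\mu = ac/(ab) = q$. Thus the desired inequality rewrites as
$$d_1(\mu - 1) + m_1(\lambda - 1) \;\ge\; \lambda\mu - 1.$$

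The naive bound $d_1, m_1 \ge 1$ gives only $(\mu - 1) + (\lambda - 1) = \lambda + \mu - 2$, which falls short of $\lambda\mu - 1$ whenever both $\lambda, \mu > 1$. The extra input I would exploit is the divisibility $\mu \mid a/d_1$, which is immediate from $d_2 \mid a$. Writing $a/d_1 = \mu v'$ for some positive integer $v'$, one has $m_1 = (a/d_1)\, b = \mu b v'$, and hence
$$m_1(\lambda - 1) = b v' \mu(\lambda - 1) = b v'(q - \mu).$$
The inequality thereby reduces to $d_1(\mu - 1) + b v'(q - \mu) \ge q - 1$. Since $\mu$ is a positive divisor of $q$ we have $1 \le \mu \le q$, so both factors $\mu - 1$ and $q - \mu$ are nonnegative; combined with $d_1, b, v' \ge 1$ this bounds the left side from below by $(\mu - 1) + (q - \mu) = q - 1$, finishing the proof.

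The main conceptual step — and the one I expect to be the only real obstacle — is noticing that $b \mid c$ supplies not merely the product relation $\lambda\mu = q$, but also the sharper divisibility $\mu \mid a/\gcd(a,b)$, which lets $\mu$ be absorbed into the coefficient $m_1$ so that the two additive terms $\mu - 1$ and $q - \mu$ telescope to $q - 1$ rather than summing only to $\lambda + \mu - 2$. Once this observation is made, the remaining manipulations are elementary.
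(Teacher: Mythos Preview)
Your argument is correct. The divisibilities $d_1\mid d_2$ and $m_1\mid m_2$ follow from $b\mid c$ as you say, the product identity gives $\lambda\mu=q$, and the crucial step $\mu\mid a/d_1$ is immediate from $d_2=\mu d_1\mid a$. Once you rewrite $m_1=(a/d_1)b=\mu v' b$, the inequality collapses to $(\mu-1)+(q-\mu)=q-1$ after using $d_1,bv'\ge 1$ and $1\le\mu\le q$.

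This is a genuinely different route from the paper's proof. The paper proceeds by a three-way case split: first on whether $\lcm(a,b)=\lcm(a,c)$ (in which case the $\gcd$ difference alone already gives $c/b-1$), and otherwise uses $\lcm(a,b)\le\tfrac12\lcm(a,c)$ together with a further split into $b\ge 2$ versus $b=1$. Each branch is elementary but the argument is somewhat ad hoc. Your approach replaces all of this with a single uniform algebraic manipulation; the extra divisibility $\mu\mid a/d_1$ is exactly what lets the two nonnegative pieces telescope to $q-1$ rather than the insufficient $\lambda+\mu-2$. The payoff is a cleaner proof with no case analysis, and it also makes the equality case transparent (equality forces $d_1=1$ or $\mu=1$, and $bv'=1$ or $\mu=q$). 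The paper's version, by contrast, requires no preliminary rewriting and each case is a one-line estimate, at the cost of having to handle the boundary $b=1$ separately.
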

\begin{proof}
Note that $\lcm(a,b)$ divides $\lcm(a,c)$.  We treat the cases $\lcm(a,b)=\lcm(a,c)$ and $\lcm(a,b)<\lcm(a,c)$ separately.
\\

If $\lcm(a,b)=\lcm(a,c)$, then $\frac{c}{b}=\frac{\gcd(a,c)}{\gcd(a,b)}$ since $\gcd(x,y)\lcm(x,y)=xy$ for all positive integers $x$ and $y$.  Since $\gcd(a,b)\geq 1$, we find
$$\gcd(a,c)-\gcd(a,b)\geq \frac{\gcd(a,c)-\gcd(a,b)}{\gcd(a,b)}=\frac{c}{b}-1,$$
and combining this with $\lcm(a,b)=\lcm(a,c)$ establishes the desired inequality.
\\

If $\lcm(a,b)<\lcm(a,c)$, then in fact $\lcm(a,b)\leq \frac{\lcm(a,c)}{2}$ because $\lcm(a,b)$ divides $\lcm(a,c)$.  When $b\geq 2$, we get
$$\lcm(a,c)-\lcm(a,b)\geq \frac{\lcm(a,c)}{2}\geq \frac{c}{2}\geq \frac{c}{b}\geq \frac{c}{b}-1,$$
and combining this with $\gcd(a,c)\geq \gcd(a,b)$ establishes the result.  When $b=1$, we get $\gcd(a,b)=1$ and $\lcm(a,b)=a$.  Using $\lcm(a,c)=\frac{ac}{\gcd(a,c)}$, we also have
$$0\leq \gcd(a,c)\left(\frac{a}{\gcd(a,c)}-1\right)\left(\frac{c}{\gcd(a,c)}-1\right)=\lcm(a,c)-a-c+\gcd(a,c).$$
Rearranging gives
$$(\gcd(a,c)+\lcm(a,c))-(1+a)\geq \frac{c}{1}-1,$$
and substituting $1=\gcd(a,b)$ and $a=\lcm(a,b)$ completes this last case.
\end{proof}
We can now prove Theorem~\ref{thm:twoprimes}.
\thmtwoprimes*
\begin{proof}
We already know from Theorem~\ref{thm:lower} that $I_r(\mathbb{Z}/n\mathbb{Z}) \geq \mathsf{D}((\mathbb{Z}/n\mathbb{Z})^\times) + (k - 1) + (\ell - 1)$, so it remains to show only that this lower bound is also an upper bound.  To this end, assume for the sake of contradiction that there exists some sequence $S$ of length $\mathsf{D}((\mathbb{Z}/n\mathbb{Z})^\times) + (k - 1) + (\ell - 1)$ over $\mathbb{Z}/n \mathbb{Z}$ such that $S$ has no nonempty subsequence the product of whose elements is idempotent.  Recall that an element of $\in \mathbb{Z}/n \mathbb{Z}$ is idempotent exactly when it is equivalent to either $0$ or $1$ modulo $p^k$ and modulo $q^{\ell}$.
\\

If $S$ contains at least $k$ elements divisible by $p$ and $\ell$ elements divisible by $q$, then the product of all of the elements of $S$ is idempotent, which yields a contradiction.  So, without loss of generality, we can restrict our attention to the case where $S$ contains at most $\ell-1$ elements divisible by $q$.  As such, $S$ contains at least $\mathsf{D}((\mathbb{Z}/n\mathbb{Z})^\times) + (k - 1)$ elements not divisible by $q$. We restrict our attention to these elements since the elements divisible by $q$ cannot be used in any idempotent product.
\\

If $S$ contains at most $k-1$ elements divisible by $p$, then it contains at least $\mathsf{D}((\mathbb{Z}/n\mathbb{Z})^\times)$ elements that are not divisible by $p$, i.e., that are units of $\mathbb{Z}/n\mathbb{Z}$.  But then, by the definition of the Davenport constant, $S$ contains a nonempty subsequence whose elements multiply to $1$, which is certainly idempotent.  So we can further restrict our attention to the case where $S$ contains $k+t$ elements divisible by $p$, for some $t\geq 0$.
\\

Let $N = \mathsf{D}((\mathbb{Z}/n\mathbb{Z})^\times)$. We know that $S$ contains the disjoint subsequences $A=a_1,\ldots,a_{k+t}$ and $B=b_1,\ldots,b_{N-t-1}$, where all of the $a_i$'s are divisible by $p$ but not by $q$ and all of the $b_i$'s are units of $\mathbb{Z}/n \mathbb{Z}$ (i.e., are divisible by neither $p$ nor $q$).  We will now focus on the residues of the $a_i$'s and $b_i$'s modulo $q^{\ell}$.  Our goal is to show that there exist $x\in Q_1=\prod_{\geq k}(A)$ and $y \in P_1=\prod_{\geq 0}(B)$ such that $xy \equiv 1 \pmod{q^{\ell}}$.  Then the product $xy$ will be idempotent in $\mathbb{Z}/n \mathbb{Z}$ because $xy \equiv 0 \pmod{p^k}$ by construction.
\\

Let $P_2$ be the set of residues modulo $q^{\ell}$ induced by the elements of $P_1$.  Note that $P_2$ is a subset of $G=(\mathbb{Z}/q^{\ell} \mathbb{Z})^{\times}$, and let $H=\Stab_{G}(P_2)$ be the stabilizer of $P_2$ in $(\mathbb{Z}/q^{\ell} \mathbb{Z})^{\times}$.  Furthermore, let $P_3$ be the set of residues in $G/H$ induced by the elements of $P_2$.  Define the sequence $B'=b'_1,\ldots,b'_{N-t-1}$, where each $b'_i$ is the image of $b_i$ in $G/H$ under the quotient map (after passing through an intermediate element in $G$, if one likes).  Note that $P_3=\prod_{\geq 0}(B')$.
\\

In a similar fashion, let $Q_2$ be the set of residues modulo $q^{\ell}$ induced by the elements of $Q_1$, and let $Q_3$ be the set of residues in $G/H$ induced by the elements of $Q_2$.  Also as above, let $A'=a'_1,\ldots,a'_{k+t}$ be the image of $A$ in $G/H$, where $Q_3=\prod_{\geq k}(A')$.  By Lemma~\ref{lem:k-fold}, we know that either $1 \in Q_3$ or $|Q_3| \geq t+1$.
\\

If $1 \in Q_3$, then there exists some $x \in \prod_{\geq k}(A)$ such that the image of $x$ in $G/H$ is the identity, i.e., $x' \in \Stab_G(P_2)$, where $x'$ is the residue of $x$ modulo $q^{\ell}$.  We know that $1 \in P_1$ (from the empty product) and hence $1 \in P_2$.  Because $x'$ stabilizes $P_2$ in $G$, there exists some $y \in P_1$ such that its image $y'$ in $G$ satisfies $x'y'=1$, i.e., $xy \equiv 1 \pmod{q^{\ell}}$.  But then $xy$ is idempotent, as desired.  For the remainder of the proof, we consider the case $|Q_3| \geq t+1$.  
\\

Consider $x\in \Stab_{G/H}(P_3)$ satisfying $xP_3=P_3$.  Lift this equation to $G$ such that $x$ is lifted to $x'$. We see that $$x'P_2\subseteq \bigcup_{y \in P_2}yH=\left(\bigcup_{y \in P_2}y\right) H=P_2 H=P_2$$ implies $x'P_2=P_2$ and $x'\in \Stab{G}(P_2)=H$.  Thus, $x'$ must reduce to the identity in $G/H$, so $\Stab_{G/H}(P_3)=\{1\}$.  Let $g$ be the number of non-identity elements of $B'$.  By applying Lemma~\ref{lem:stabilizer-bound} to these elements of $B'$, we get $|P_3| \geq g+1$.
\\

If $(t+1)+(g+1)>|G/H|$, then the sets $\{x^{-1}: x \in Q_3\}$ and $P_3$ intersect in $G/H$ by the Pigeonhole Principle.  In other words, there exist $x \in Q_1$ and $z \in P_1$ such that the image of $x^{-1}$ in $G/H$ equals the image of $z$ in $G/H$.  Letting $x'$ and $z'$ be the images of $x$ and $z$ in $G$, we see that $(x')^{-1} \in z'H \subseteq P_2$, where the last inclusion follows from the discussion of the previous paragraph.  Hence, there exists some $y \in P_1$ with image $y'$ in $G$ such that $(x')^{-1}=y'$ and $x'y'=1$.  But this means that $xy \equiv 1 \pmod{q^{\ell}}$, in which case we are done.
\\

We now treat the case where $(t+1)+(g+1)\leq |G/H|$.  Recall that when the sequence $B$ is reduced modulo $q^{\ell}$, exactly $g$ elements end up outside $H$.  So the remaining $(\mathsf{D}((\mathbb{Z}/n\mathbb{Z})^\times)-t-1)-g$ elements of $B$ reduce to elements of $H$.  Let $C$ be the subsequence of these elements, in $\mathbb{Z}/n\mathbb{Z}$.  Recall the decomposition $$(\mathbb{Z}/n \mathbb{Z})^{\times} \cong (\mathbb{Z}/p^k \mathbb{Z})^{\times} \times (\mathbb{Z}/q^{\ell} \mathbb{Z})^{\times} \cong C_{p^{k-1}(p-1)} \times C_{q^{\ell-1}(q-1)}.$$  Corollary~\ref{cor:davenport} tells us that $$\mathsf{D}((\mathbb{Z}/n \mathbb{Z})^{\times})=\gcd(p^{k-1}(p-1),q^{\ell-1}(q-1))+\lcm(p^{k-1}(p-1),q^{\ell-1}(q-1))-1.$$  Because they reduce to elements of $H$ modulo $q^{\ell}$, the elements of $C$ must actually be in a subgroup of $(\mathbb{Z}/n \mathbb{Z})^{\times}$ that is isomorphic to $C_{p^{k-1}(p-1)} \times C_{|H|}$. (Note that $H$ is cyclic because it is a subgroup of the cyclic group $(\mathbb{Z}/q^{\ell}\mathbb{Z})^{\times}$.) In the next paragraph, we will show that $|C|\geq \mathsf{D}(C_{p^{k-1}(p-1)} \times C_{|H|})$.  This will imply that there is a nonempty subsequence of $C$ whose elements multiply to the identity, which is, of course, idempotent in $\mathbb{Z}/n \mathbb{Z}$.
\\

Because
$$|C|=\mathsf{D}((\mathbb{Z}/n\mathbb{Z})^\times)-((t+1)+(g+1))+1\geq \mathsf{D}((\mathbb{Z}/n\mathbb{Z})^\times)-\frac{q^{\ell-1}(q-1)}{|H|}+1,$$
it remains only to show that $$\mathsf{D}((\mathbb{Z}/n\mathbb{Z})^\times)-\frac{q^{\ell-1}(q-1)}{|H|}+1\geq \mathsf{D}(C_{p^{k-1}(p-1)} \times C_{|H|}).$$
Corollary \ref{cor:davenport} tells us that $$\mathsf{D}(C_{p^{k-1}(p-1)} \times C_{|H|})=\gcd(p^{k-1}(p-1),|H|)+\lcm(p^{k-1}(p-1),|H|)-1,$$
and an application of Proposition~\ref{prop:gcd-ineq} with $a=p^{k-1}(p-1)$, $b=|H|$, and $c=q^{\ell-1}(q-1)$ establishes the desired inequality.  This completes the proof.
\end{proof}
This theorem also lets us confirm Conjecture~\ref{conj:equality} for the case $n=2p^k q^{\ell}$.
\begin{corollary}\label{cor:main}
Let $n=2p^k q^{\ell}$, where $p$ and $q$ are distinct odd primes and $k$ and $\ell$ are positive integers.  Then
$$I_r(\mathbb{Z}/n\mathbb{Z})=\mathsf{D}((\mathbb{Z}/n\mathbb{Z})^{\times})+(k-1)+(\ell-1).$$
\end{corollary}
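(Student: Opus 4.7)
The plan is to derive this corollary as a direct combination of Theorem~\ref{thm:twoprimes} and Corollary~\ref{cor:twom} (or equivalently Theorem~\ref{thm:twom}), since the case $n = 2 p^k q^\ell$ with $p,q$ odd is exactly the situation of ``doubling'' an odd two-prime modulus. The paragraph immediately after Theorem~\ref{thm:twoprimes} already signals this, so the proof should be short.

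Concretely, I would set $m = p^k q^\ell$ and note that $m$ is odd since both $p$ and $q$ are odd primes. Applying Theorem~\ref{thm:twom} gives $I_r(\mathbb{Z}/2m\mathbb{Z}) = I_r(\mathbb{Z}/m\mathbb{Z})$, and then Theorem~\ref{thm:twoprimes} evaluates the right-hand side as $\mathsf{D}((\mathbb{Z}/m\mathbb{Z})^\times) + (k-1) + (\ell-1)$.

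It remains to rewrite the Davenport constant in terms of $\mathbb{Z}/n\mathbb{Z}$. Since $(\mathbb{Z}/2\mathbb{Z})^\times$ is trivial, the Chinese Remainder Theorem gives $(\mathbb{Z}/n\mathbb{Z})^\times \cong (\mathbb{Z}/2\mathbb{Z})^\times \times (\mathbb{Z}/m\mathbb{Z})^\times \cong (\mathbb{Z}/m\mathbb{Z})^\times$, so the two Davenport constants agree. Alternatively, one can simply quote Corollary~\ref{cor:twom} with $c_m = 0$ (which holds by Theorem~\ref{thm:twoprimes}) and observe that $\Omega(2m) - \omega(2m) = \Omega(m) - \omega(m) = (k-1) + (\ell-1)$ because $m$ is odd.

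There is really no obstacle here: the two ingredients have already done all of the combinatorial work, and the corollary is a bookkeeping step. The only thing to watch for is to record explicitly where the hypothesis that $p,q$ are \emph{odd} is used, namely in ensuring that $m = p^k q^\ell$ is odd so that Theorem~\ref{thm:twom} applies.
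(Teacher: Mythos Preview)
Your proposal is correct and matches the paper's approach exactly: the paper's proof is the single sentence that the corollary follows immediately from Corollary~\ref{cor:twom} and Theorem~\ref{thm:twoprimes}, which is precisely the combination you describe. Your additional remarks spelling out the isomorphism $(\mathbb{Z}/2m\mathbb{Z})^\times \cong (\mathbb{Z}/m\mathbb{Z})^\times$ and the equality $\Omega(2m)-\omega(2m)=\Omega(m)-\omega(m)$ simply unpack what Corollary~\ref{cor:twom} already packages.
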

\begin{proof}
This corollary follows immediately from Corollary~\ref{cor:twom} and Theorem~\ref{thm:twoprimes}.
\end{proof}

\section{Other Rings}\label{sec:dedekind}

We now turn to a more general discussion of Erd\H{o}s-Burgess constants in rings.  We focus on the rings in which we can define analogs of $\Omega(n)$ and $\omega(n)$: unique factorization domains (UFDs), which have unique prime factorization of elements, and Dedekind domains, which have unique prime factorization of ideals.  We remark that even though UFDs and Dedekind domains are both extensions of principal ideal domains (PIDs), there exist both UFDs that are not Dedekind domains and Dedekind domains that are not UFDs.  We remark also that UFD and PID are equivalent in a Dedekind domain.  Many of the arguments presented in the previous sections still apply in these more general settings, which unify the cases presented in \cite{hao2018erd, hao2018modular}.
\\

In order to apply the techniques of \cite{hao2018erd, hao2018modular} and the previous sections of this paper, we need a more general Chinese Remainder Theorem. The version stated in the standard algebra text of Atiyah and MacDonald \cite{atiyah1969introduction} will suffice.
\begin{proposition}[{\cite[Proposition~1.10]{atiyah1969introduction}}]\label{prop:crt}
If $\{\mathfrak{a}_1, \ldots, \mathfrak{a}_n\}$ is a set of pairwise coprime ideals of a commutative ring $A$ (i.e., $\mathfrak{a}_i + \mathfrak{a}_j = A$ for all $i\not= j$), then the natural projection map $\phi: A\rightarrow\prod_{i = 1}^n A/\mathfrak{a}_i$ is surjective.
\end{proposition}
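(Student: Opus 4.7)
The plan is to prove surjectivity by reducing to the two-ideal case and then, in that case, constructing explicit preimages using a partition of unity. Since the ring $A$ is not assumed to be a PID or to have any factorization structure, the only tool available is the hypothesis $\mathfrak{a}_i + \mathfrak{a}_j = A$, which I will use repeatedly to extract elements that behave like $0$ or $1$ modulo prescribed ideals.

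First I would handle $n = 2$ directly. From $\mathfrak{a}_1 + \mathfrak{a}_2 = A$, write $1 = u + v$ with $u \in \mathfrak{a}_1$ and $v \in \mathfrak{a}_2$, so $v \equiv 1 \pmod{\mathfrak{a}_1}$ and $u \equiv 1 \pmod{\mathfrak{a}_2}$. Given any target $(x_1 + \mathfrak{a}_1,\, x_2 + \mathfrak{a}_2) \in A/\mathfrak{a}_1 \times A/\mathfrak{a}_2$, the element $y = x_1 v + x_2 u$ reduces to $x_1$ modulo $\mathfrak{a}_1$ and to $x_2$ modulo $\mathfrak{a}_2$, giving surjectivity. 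This verifies the base case and also isolates the mechanism to be generalized: for each coordinate $i$, produce an element $e_i \in A$ that is $\equiv 1 \pmod{\mathfrak{a}_i}$ and $\equiv 0 \pmod{\mathfrak{a}_j}$ for all $j \ne i$, and then send $(x_1, \ldots, x_n)$ to $\sum_i x_i e_i$.

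To build $e_1$ (and symmetrically each $e_i$), I would proceed by induction on $n$, and the core step is to show that $\mathfrak{a}_1$ is coprime to $\mathfrak{a}_2 \cdots \mathfrak{a}_n$ (equivalently, to $\bigcap_{j \ge 2} \mathfrak{a}_j$). For each $j \ge 2$, use $\mathfrak{a}_1 + \mathfrak{a}_j = A$ to write $1 = u_j + v_j$ with $u_j \in \mathfrak{a}_1$ and $v_j \in \mathfrak{a}_j$. Expanding $\prod_{j \ge 2}(u_j + v_j) = 1$ and collecting all terms that contain at least one $u_j$ factor into $\mathfrak{a}_1$, the only remaining term is $\prod_{j \ge 2} v_j \in \mathfrak{a}_2 \cdots \mathfrak{a}_n$. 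Hence $1 \in \mathfrak{a}_1 + \mathfrak{a}_2 \cdots \mathfrak{a}_n$, so these two ideals are coprime. Applying the $n = 2$ case to $\mathfrak{a}_1$ and $\mathfrak{a}_2 \cdots \mathfrak{a}_n$ produces $e_1$ with the required properties, since any element of $\mathfrak{a}_2 \cdots \mathfrak{a}_n$ lies in every $\mathfrak{a}_j$ for $j \ge 2$.

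The main obstacle is precisely the coprimality step in the inductive construction: pairwise coprimality must be promoted to coprimality between a single ideal and the product of all the others, and this has to be done without any appeal to prime factorization. The expansion trick $1 = \prod_j (u_j + v_j)$ is what makes the argument go through, so I would want to state it cleanly as an intermediate lemma. Once the $e_i$ are in hand, surjectivity follows at once from $\phi(\sum_i x_i e_i) = (x_1 + \mathfrak{a}_1, \ldots, x_n + \mathfrak{a}_n)$, completing the proof.
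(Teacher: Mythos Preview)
Your proof is correct; the paper itself does not supply a proof of this proposition but merely quotes it from Atiyah--Macdonald. The argument you outline---promote pairwise coprimality to coprimality of $\mathfrak{a}_i$ with $\prod_{j\neq i}\mathfrak{a}_j$ via the expansion $1=\prod_{j\neq i}(u_j+v_j)$, build elements $e_i$ with $e_i\equiv \delta_{ij}\pmod{\mathfrak{a}_j}$, and send $(x_1,\ldots,x_n)$ to $\sum_i x_i e_i$---is exactly the standard proof given in that reference.
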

We now show that the results of \cite{hao2018erd, hao2018modular} mostly generalize to UFDs.  For any element $a$ of a UFD $R$, let $\Omega(a)$ denote the total number of primes in the prime factorization of $a$ (with multiplicity), and let $\omega(a)$ denote the number of distinct primes (up to multiplication by units) in this prime factorization.
\begin{theorem}\label{thm:ufd}
Let $R$ be a UFD, and let $\mathfrak{a} = (a)$ for some $a\in R$ such that $R/\mathfrak{a}$ is a finite ring.  Then
\[I_r(R/\mathfrak{a})\ge\mathsf{D}((R/\mathfrak{a})^\times) + \Omega(a) - \omega(a).\]
Moreover, equality holds whenever $a$ is a prime power. If $R$ is a PID, then equality also holds whenever $a$ is a product of distinct primes, i.e., $a$ is not divisible by the square of any prime.
\end{theorem}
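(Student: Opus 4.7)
The plan is to generalize the proofs of \cite{hao2018modular} from $\mathbb{Z}/n\mathbb{Z}$ to the UFD setting, factoring $a = u \cdot p_1^{e_1}\cdots p_r^{e_r}$ with $u$ a unit and the $p_i$ pairwise non-associate primes.

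For the lower bound, I would exhibit an idempotent-product-free sequence in $R/\mathfrak{a}$ of length $\mathsf{D}((R/\mathfrak{a})^\times) + \Omega(a) - \omega(a) - 1$: take a zero-sum-free sequence of $\mathsf{D}((R/\mathfrak{a})^\times)-1$ units (which may be lifted to representatives of $R$ coprime to $a$) together with $e_j - 1$ copies of the image of $p_j$ for each $j$. The key observation is that unique factorization forces every idempotent $[x]\in R/\mathfrak{a}$ to satisfy $v_{p_j}(x)=0$ or $v_{p_j}(x)\geq e_j$ for each $j$: the relation $x^2\equiv x\pmod{a}$ gives $p_j^{e_j}\mid x(x-1)$, and since $x$ and $x-1$ are coprime in $R$, primality forces $p_j^{e_j}$ to divide one of them. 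Because our construction contains only $e_j-1$ copies of $p_j$, any subsequence product has $v_{p_j}$ strictly less than $e_j$, so in fact no $p_j$ can appear. Hence any idempotent-product subsequence must lie entirely in the unit prefix, where its product would have to be $1$, contradicting zero-sum-freeness.

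For the upper bound in the prime power case $a=p^e$, I would use a direct pigeonhole argument: an element of $R/\mathfrak{a}$ is a unit iff it is coprime to $p$ in $R$, so in any sequence of length $\mathsf{D}((R/\mathfrak{a})^\times)+(e-1)$ either at least $e$ elements are divisible by $p$ --- in which case the product of any $e$ such elements is divisible by $p^e$, hence $\equiv 0 \pmod a$ and idempotent --- or else at least $\mathsf{D}((R/\mathfrak{a})^\times)$ elements are units, and the Davenport constant supplies a subsequence with product $1$.

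For the squarefree PID case $a=p_1\cdots p_r$, the PID hypothesis guarantees that the ideals $(p_i)$ are pairwise coprime, so Proposition~\ref{prop:crt} yields $R/\mathfrak{a}\cong\prod_{i=1}^r F_i$ with each $F_i = R/(p_i)$ a finite field; idempotents correspond exactly to tuples in $\{0,1\}^r$. Adapting Lemma~\ref{lem:reduction}: given any sequence $S$ of length $\mathsf{D}((R/\mathfrak{a})^\times)$, I would form $S'$ by replacing every zero coordinate of every term by $1$ to obtain a sequence of units; the Davenport constant supplies a nonempty subsequence of $S'$ with product $(1,\ldots,1)$, and the corresponding subsequence of $S$ has each coordinate of its product in $\{0,1\}$ (coordinates untouched by the reduction produce $1$; coordinates where a $0$ appeared produce $0$), hence is idempotent. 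The main subtlety lies in why the squarefree case requires a PID rather than merely a UFD: in a general UFD the ideals $(p_i)$ for distinct primes need not sum to the whole ring, so both the CRT decomposition and the reduction to units can fail.
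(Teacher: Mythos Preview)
Your proposal is correct and follows essentially the same route as the paper: the lower-bound sequence (a zero-sum-free unit sequence augmented by $e_j-1$ copies of each $p_j$), the pigeonhole dichotomy for the prime-power case, and the Lemma~\ref{lem:reduction}-style reduction via the Chinese Remainder Theorem for the squarefree PID case all match the paper's argument. The only point you gloss over that the paper spells out is why an element of $R/(p^e)$ not divisible by $p$ is a unit (the paper argues that $R/(p)$ is a finite integral domain, hence a field, and then uses a telescoping product to lift the inverse); conversely, your valuation-based justification of the idempotent characterization is more explicit than the paper's ``it is clear.''
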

\begin{proof}
We begin with the lower bound.  We remark that the Davenport constant $\mathsf{D}((R/\mathfrak{a})^\times)$ is finite because $R/\mathfrak{a}$ is finite.  Following the example of \cite{hao2018erd, hao2018modular}, we simply construct a sequence $S$ of length $\mathsf{D}((R/\mathfrak{a})^\times) + \Omega(a) - \omega(a)-1$ that does not contain a nonempty subsequence whose elements multiply to an idempotent element of $R/\mathfrak{a}$.  Write $a=\prod_{i=1}^n p_i^{k_i}$ as a product of powers of distinct primes in $R$.  By the definition of the Davenport constant, there exists a sequence $T$ over $(R/\mathfrak{a})^\times$ of length $\mathsf{D}((R/\mathfrak{a})^\times)-1$ that does not contain a nonempty subsequence the product of whose elements is idempotent.  We obtain the sequence $S$ of length $\mathsf{D}((R/\mathfrak{a})^\times) + \Omega(a) - \omega(a)-1$ by augmenting $T$ by $k_i-1$ elements with representative $p_i$  for each $1\leq i \leq n$, and we claim that this $S$ works.  It is clear that any idempotent element of $R/\mathfrak{a}$ must be equivalent to either $0$ or $1$ modulo each prime power dividing $a$, so there cannot be an idempotent product that includes any elements of $S$ that are not in $T$.  But we know that we cannot make an idempotent product using the only the elements of $T$, so we conclude that $S$ does not contain any nonempty subsequence whose elements multiply to an idempotent product.  This establishes the lower bound.
\\

Next, we show that equality holds whenever $a=p^k$ is a prime power.  Let $N = \mathsf{D}((R/\mathfrak{a})^\times)$ and let $S=a_1,\ldots,a_{N+k-1}$ be a sequence over $R/\mathfrak{a}$ of length $\mathsf{D}((R/\mathfrak{a})^{\times})+k-1$. We will show that $S$ contains a nonempty subsequence the product of whose elements is idempotent.  If at least $k$ elements of $S$ are divisible by $p$, then the product of these elements in $R/\mathfrak{a}$ is $0$, which is certainly idempotent.  If fewer than $k$ elements of $S$ are divisible by $p$, then at least $\mathsf{D}((R/\mathfrak{a})^{\times})$ elements of $S$ are in $(R/\mathfrak{a})^{\times}$.
\\

We must justify the assertion that non-divisibility by $p$ is sufficient for an element $x \in R/\mathfrak{a}$ to be a unit.  The quotient $R/(p)$ is an integral domain because $p$ is prime.  Furthermore, $R/(p)$ is finite (because it is a quotient of $R/\mathfrak{a}$) and hence a field.  Since $x \notin (p)$, its image in $R/(p)$ is nonzero and hence a unit, so (in the lift to $R/\mathfrak{a}$) there exist $y,z \in R/\mathfrak{a}$ such that $xy=1+zp$.  Then $xy(1-(zp)+\cdots+(-1)^{k-1}(zp)^{k-1})=(1+zp)(1-(zp)+\cdots+(-1)^{k-1}(zp)^{k-1})=1+(-1)^{k-1}z^kp^k=1$ shows that $x$ is in fact a unit in $R/\mathfrak{a}$.
%(Some justification is required as to why an element not divisible by $p$ is a unit in $R/\mathfrak{a}$. Since $R/\mathfrak{a}$ is finite, we know that $R/\mathfrak{p}$, where $\mathfrak{p} = (p)$, is also finite. Thus it is a finite integral domain, and hence a field. Now if $x$ is not in $\mathfrak{p}$ then there is $y$ with $xy\in 1 + \mathfrak{p}$, that is, $xy = 1 + rp$. Then $xy(1 - rp + r^2p^2 - \cdots + (-1)^{n-1}r^{n-1}p^{n-1})\in 1 + \mathfrak{a}$, so that $x$ is indeed a unit in $R/\mathfrak{a}$.)
\\

Now, by the definition of the Davenport constant, some nonempty product of these units is $1$, which is idempotent.  Hence, in both cases, $S$ contains a nonempty subsequence the product of whose elements is idempotent, which shows that the lower bound is also an upper bound.
\\

Finally, we show that equality holds when $R$ is a PID and $a=p_1\cdots p_n$ is a product of distinct primes in $R$, i.e., $a$ is squarefree.  Because any nonzero prime ideal is maximal in a PID, we see that $\{(p_1), \ldots, (p_n)\}$ is a set of pairwise coprime ideals in $R$, so we can use Proposition~\ref{prop:crt} (Generalized Chinese Remainder Theorem).  By the argument of Lemma~\ref{lem:reduction}, we can establish the upper bound by considering only sequences of elements that are not divisible by any of the $p_i$'s, i.e., sequences of units of $R/\mathfrak{a}$. As above, we must justify the claim that any such element $x$ is a unit in $R/\mathfrak{a}$.  Let $x'$ be any lift of $x$ to $R$.  We know that $x'$ has an inverse modulo each ideal $(p_i)$, i.e., for each $1 \leq i \leq r$, there exist $y_i, z_i \in R$ such that $x'y_i=1+z_ip_i$.  By the Generalized Chinese Remainder Theorem, there exists $y \in R/\mathfrak{a}$ such that $xy=1$ in $R/\mathfrak{a}$, as desired.  Now, similar to above, any sequence $S$ over $(R/\mathfrak{a})^{\times}$ of length $\mathsf{D}((R/\mathfrak{a})^{\times})$ contains a nonempty subsequence whose elements multiply to $1$ by the definition of the Davenport constant.  This completes the proof.
\end{proof}
We now prove the analogous result for Dedekind domains.  For any ideal $\mathfrak{a}$ of a Dedekind domain $R$, let $\Omega(\mathfrak{a})$ denote the total number of prime ideals in the prime ideal factorization of $\mathfrak{a}$ (with multiplicity), and let $\omega(\mathfrak{a})$ denote the number of distinct prime ideals in this factorization.
\begin{theorem}\label{thm:dedekind}
Let $R$ be a Dedekind domain and $\mathfrak{a}$ an ideal of $R$ such that $R/\mathfrak{a}$ is a finite ring. Then
\[I_r(R/\mathfrak{a})\ge\mathsf{D}((R/\mathfrak{a})^\times) + \Omega(\mathfrak{a}) - \omega(\mathfrak{a}).\]
Moreover, equality holds if $\mathfrak{a}$ is either a power of a prime ideal or a product of distinct prime ideals.
\end{theorem}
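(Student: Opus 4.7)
The plan is to follow the proof of Theorem~\ref{thm:ufd} nearly line-by-line, replacing prime elements by prime ideals and invoking the Generalized Chinese Remainder Theorem (Proposition~\ref{prop:crt}) wherever the UFD argument relied on element-level factorizations. Write $\mathfrak{a}=\prod_{i=1}^{n}\mathfrak{p}_i^{k_i}$, so that $\Omega(\mathfrak{a})-\omega(\mathfrak{a})=\sum_i(k_i-1)$.

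For the lower bound, the only substantive new ingredient is a surrogate for the primes $p_i$ of the UFD proof. For each $i$, I would apply Proposition~\ref{prop:crt} to the pairwise coprime ideals $\mathfrak{p}_i^2$ and $\mathfrak{p}_j$ (for $j\neq i$) to produce $\pi_i\in R$ with $v_{\mathfrak{p}_i}(\pi_i)=1$ and $v_{\mathfrak{p}_j}(\pi_i)=0$ for $j\neq i$. Augmenting a Davenport-bad sequence $T$ over $(R/\mathfrak{a})^{\times}$ of length $\mathsf{D}((R/\mathfrak{a})^{\times})-1$ with $k_i-1$ copies of (the image of) $\pi_i$ for each $i$ yields a sequence of the claimed length. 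Since any unit of $R/\mathfrak{a}$ lifts to an element of $R$ of $\mathfrak{p}_i$-adic valuation $0$ (because $1\notin\mathfrak{p}_i$), a nonempty subproduct using $m_i$ copies of $\pi_i$ has $\mathfrak{p}_i$-adic valuation exactly $m_i$ in $R$; idempotency modulo $\mathfrak{p}_i^{k_i}$ then forces $m_i\in\{0\}\cup[k_i,\infty)$ for every $i$, and since $m_i\leq k_i-1$ by construction, each $m_i$ must be $0$. The subproduct therefore uses only elements of $T$, contradicting the Davenport-badness of $T$.

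For equality when $\mathfrak{a}=\mathfrak{p}^k$, the dichotomy from Theorem~\ref{thm:ufd} transfers directly. Given a sequence of length $\mathsf{D}((R/\mathfrak{a})^{\times})+k-1$, either at least $k$ elements lie in $\mathfrak{p}$ (whose product vanishes in $R/\mathfrak{a}$), or at least $\mathsf{D}((R/\mathfrak{a})^{\times})$ elements lie outside $\mathfrak{p}$. Any such $x$ is a unit in $R/\mathfrak{a}$: since $R/\mathfrak{p}$ is a finite integral domain and hence a field, there exists $y$ with $xy\equiv 1\pmod{\mathfrak{p}}$, and writing $xy=1+z$ with $z\in\mathfrak{p}$, the geometric series identity $(1+z)\bigl(1-z+\cdots+(-1)^{k-1}z^{k-1}\bigr)\equiv 1\pmod{\mathfrak{p}^k}$ exhibits the inverse. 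The Davenport constant then extracts a unit subproduct equal to $1$.

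For equality when $\mathfrak{a}=\mathfrak{p}_1\cdots\mathfrak{p}_n$ is squarefree, Proposition~\ref{prop:crt} yields $R/\mathfrak{a}\cong\prod_i R/\mathfrak{p}_i$, a product of finite fields whose idempotents are precisely the elements with each coordinate in $\{0,1\}$. The verbatim analog of Lemma~\ref{lem:reduction}, with divisibility by $p_i$ replaced by membership in $\mathfrak{p}_i$, reduces the problem to sequences of length $\mathsf{D}((R/\mathfrak{a})^{\times})$ whose elements lie outside every $\mathfrak{p}_i$; such elements are units in $R/\mathfrak{a}$ by the CRT decomposition, and the definition of the Davenport constant finishes. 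The main (and really only) technical hurdle is the CRT-powered construction of the $\pi_i$'s with prescribed $\mathfrak{p}_i$-adic valuations; once that is in hand, the rest is a mild transcription of the UFD arguments.
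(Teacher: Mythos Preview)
Your proposal is correct and follows essentially the same approach as the paper: both augment a Davenport-bad sequence with $k_i-1$ elements lying in $\mathfrak{p}_i$ but outside the other $\mathfrak{p}_j$ (constructed via the generalized CRT), use that an idempotent in $\mathfrak{p}_i$ must lie in $\mathfrak{p}_i^{k_i}$ to rule out idempotent subproducts, and handle the prime-power and squarefree equality cases exactly as in Theorem~\ref{thm:ufd}. The only cosmetic difference is that you build a single element $\pi_i$ of $\mathfrak{p}_i$-adic valuation exactly $1$ and take $k_i-1$ copies, arguing directly via valuations, whereas the paper chooses $k_i-1$ elements $a_{i,1},\ldots,a_{i,k_i-1}\in\mathfrak{p}_i$ whose product lies in $\mathfrak{p}_i^{k_i-1}\setminus\mathfrak{p}_i^{k_i}$ and derives the contradiction from the total product of $S$; these are the same argument.
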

\begin{proof}
Once again, we begin with the lower bound.  Write $\mathfrak{a}=\prod_{i=1}^n \mathfrak{p}_i^{k_i}$ as a product of powers of distinct prime ideals of $R$.  As in the proof of Theorem~\ref{thm:ufd}, let $T$ be a sequence over $R/\mathfrak{a}$ of length $\mathsf{D}((R/\mathfrak{a})^{\times})-1$ that does not contain a nonempty subsequence the product of whose elements is idempotent.  For each $1\leq i \leq n$, note that $\mathfrak{p}_i^{k_i}\subseteq \mathfrak{p}_i^{k_i-1}$ but these two ideals are not equal because Dedekind domains have unique prime factorization of ideals. (We let $\mathfrak{p}_i^0 = R$.) Hence, the inclusion is proper. Since $\mathfrak{p}_i^{k_i-1}$ is generated by products of the form $r_1\cdots r_{k_i-1}$ with each $r_j\in\mathfrak{p}_i$, there exists some $x_i\in\mathfrak{p}_i^{k_i-1}\backslash\mathfrak{p}_i^{k_i}$ of the form $x_i=a_{i,1} \cdots a_{i,k_i-1}$, where each $a_{i,j} \in \mathfrak{p}_i$.  We now obtain a sequence $S$ of length $\mathsf{D}((R/\mathfrak{a})^\times) + \Omega(\mathfrak{a}) - \omega(\mathfrak{a})-1$ by augmenting $T$ by these $a_{i,j}$ elements (or, rather, their images in $R/\mathfrak{a}$, which retain the inclusion and exclusion properties mentioned above).  We require the following two observations for our claim that $S$ does not contain a nonempty subsequence the product of whose elements is idempotent.
\\

First, we can choose the elements $a_{i, 1}, \ldots, a_{i, k_i-1}$ not to be in any other ideal $\mathfrak{p}_j$.  Since nonzero prime ideals are maximal in Dedekind domains, $\mathfrak{p}_i$ and $\mathfrak{p}_j$ are coprime in $R$, i.e., there exist $x \in \mathfrak{p}_i$ and $y \in \mathfrak{p}_j$ such that $x+y=1$.  Moreover, $\mathfrak{p}_i^{k_i}$ and $\mathfrak{p}_j$ are coprime since $x^{k_i} \in \mathfrak{p_i}^{k_i}$ and $1-(1-y)^{k_i} \in \mathfrak{p}_j$ satisfy $(x^{k_i})+(1-(1-y)^{k_i})=1$.  This lets us apply the Generalized Chinese Remainder Theorem to the set of ideals $\{\mathfrak{p}_1, \ldots, \mathfrak{p}_{i-1}, \mathfrak{p}_i^{k_i}, \mathfrak{p}_{i+1},\ldots, \mathfrak{p}_n\}$, and we can guarantee that each $a_{i, \ell}=1$ in the quotient $R/\mathfrak{p}_j$ for all $i\neq j$.  
\\

Second, suppose $x\in R/\mathfrak{a}$ is an idempotent element that is also in the image of some $\mathfrak{p}_i$. We will show that in fact $x$ is in the image of $\mathfrak{p}_i^{k_i}$.  Let $x'$ be the image of $x$ in the (further) quotient $R/\mathfrak{p}_i^{k_i}$.  Since $x^2=x$ in $R/\mathfrak{a}$, we also have $x'(1-x')=0$ in $R/\mathfrak{p}_i^{k_i}$.  We compute $0=x'(1-x')(1+x'+\cdots+(x')^{k_i-1})=x'(1-(x')^{k_i})=x'$, which implies that $x$ is in the image of $\mathfrak{p}_i^{k_i}$, as desired.
\\

%Suppose $y\in R/\mathfrak{a}$ is an idempotent element that is also in the image of some $\mathfrak{p}_i$ for which $k_i\geq 2$. We will show that in fact $y$ is in the image of $\mathfrak{p}_i^{k_i}$. Indeed, notice that $y^2 = y$ in $R/\mathfrak{a}$ hence $y^2 = y$ also holds in $R/\mathfrak{p}_i^{k_i}$, which is just a further quotient of the first quotient. Then $y(1 - y) = 0$ in $R/\mathfrak{p}_i^{k_i}$. All that remains to be shown is that $1 - y$ is a unit in $\mathfrak{p}_i^{k_i}$. This is clear: since $y$ is in the image of $\mathfrak{p}_i$, the element $1 + y + \cdots + y^{k_i - 1}$ works as an inverse.

The remainder of the argument proceeds as expected.  Assume for the sake of contradiction that there is some nonempty subsequence $U$ of $S$ the product of whose elements (call it $y$) is idempotent.  Because of the construction of $T$ and the fact that the only idempotent unit is $1$, it is clear that $U$ includes some element $x \in \mathfrak{p}_i$ for some $i$ with $k_i \geq 2$.  Hence, $y \in \mathfrak{p}_i$.  As shown in the previous paragraph, this implies that $y\in \mathfrak{p}_i^{k_i}$ and, moreover, the product $\pi$ of all of the elements of $S$ is also in $\mathfrak{p}_i^{k_i}$.  Since ideal containment in Dedekind domains corresponds to ideal divisibility, $(\pi)\subseteq \mathfrak{p}_i^{k_i}$ implies that there are at least $k_i$ factors of $\mathfrak{p}_i$ in the prime factorization of $(\pi)$.  However, the only elements of $S$ that generate ideals divisible by $\mathfrak{p}_i$ are $a_{i,1},\ldots, a_{i,k_i-1}$, and their product is not in $\mathfrak{p}_i^{k_i}$.  This yields the required contradiction.
\\

When $\mathfrak{a}=\mathfrak{p}^k$ is a prime power, the Pigeonhole Principle argument from the proof of the corresponding part of Theorem~\ref{thm:ufd} applies with no modifications.
\\

Finally, when $\mathfrak{a}=\mathfrak{p}_1\cdots \mathfrak{p}_n$ is a product of distinct prime ideals, the corresponding argument from the proof of Theorem~\ref{thm:ufd} works here, too, because all we needed was the Generalized Chinese Remainder Theorem.

%Now suppose $\mathfrak{a} = \prod_{i = 1}^r\mathfrak{p}_i$. What we need for the argument from earlier to go through is, again, the generalized Chinese Remainder Theorem. This time, it works because nonzero prime ideals in Dedekind domains are maximal, hence $\mathfrak{p}_i + \mathfrak{p}_j = R$ for any $i\not= j$. The modified elements constructed are units for the same reason as before.
%Finally, equality when $R$ is a PID and $\mathfrak{a}=\mathfrak{p}_1\cdots \mathfrak{p}_n$ is a product of distinct prime ideals follows from the last part of Theorem~\ref{thm:ufd}.  For each $1\leq i \leq n$, write $\mathfrak{p}_i=(p_i)$ so that $\mathfrak{a}=(a)$ for $a=p_1\cdots p_n$.  (Note that $a$ is still squarefree.)  Since PID implies UFD, we can simply apply Theorem~\ref{thm:ufd}.
\end{proof}

\section{Concluding Remarks and Open Problems}\label{sec:conclusion}

In this paper, we have confirmed Conjecture~\ref{conj:equality} for many positive integers $n$.  In particular, the conjecture is now known to hold in the following cases:
\begin{itemize}
    \item $n$ is a product of distinct primes (\cite[Theorem 1.1]{hao2018modular}).
    \item $n$ is a prime power (\cite[Theorem 1.1]{hao2018modular}).
    \item $n$ is twice a prime power (Corollary~\ref{cor:minor}).
    \item $n$ has exactly two distinct prime divisors (Theorem~\ref{thm:twoprimes}).
    \item $n$ is double the product of two odd prime powers (Corollary~\ref{cor:main}).
\end{itemize}
We wish to emphasize that the general conjecture for all integers $n>1$ is still open and seems quite difficult.  We consider the following cases particularly approachable for future research:
\begin{itemize}
    \item $n$ has exactly three distinct prime factors.
    \item $n$ is the product of a squarefree integer and a prime power (as discussed in Section $2$).
\end{itemize}
One might also investigate extension results in the style of Theorem~\ref{thm:twom}---for instance, if some $m$ not divisible by $3$ satisfies Conjecture~\ref{conj:equality}, is it always true that $3m$ also satisfies Conjecture~\ref{conj:equality}?
\\

Our proofs of upper bounds in the previous sections suggest a structure result about the ``most difficult'' sequences.  Write $n=p_1^{k_1}\cdots p_r^{k_r}$ as a product of powers of distinct primes.  If we want a product $x$ that is equivalent to either $0$ or $1$ modulo each prime power, then factors of $p_i$ are ``useful'' only when $x$ has at least $k_i$ such factors.  For this reason, it is strictly harder to find an idempotent product when the elements of our sequence $S$ over $\mathbb{Z}/n\mathbb{Z}$ are squarefree with respect to the $p_i$'s, and, in fact, we can consider only sequences of such quasi-squarefree elements in our proofs of upper bounds.  This property could be of use for future computational and experimental work on Erd\H{o}s-Burgess constants.
\\

The inverse Erd\H{o}s-Burgess problem is also of interest: given some integer $n>1$, characterize all sequences $S$ over $\mathbb{Z}/n\mathbb{Z}$ of length $I_r(\mathbb{Z}/n\mathbb{Z})-1$ for which no nonempty subsequence has an idempotent product.  In light of Lemma~\ref{lem:reduction} and the discussion in the previous paragraph, we present the following question.
\begin{question}\label{ques:inverse}
Fix any $n>1$, and write $n=p_1^{k_1}\cdots p_r^{k_r}$ as a product of powers of distinct primes.  Let $S$ be a sequence over $\mathbb{Z}/n\mathbb{Z}$ of length $I_r(\mathbb{Z}/n\mathbb{Z})-1$ that does not have the Erd\H{o}s-Burgess property. Is it true all elements of $S$ are squarefree with respect to each $p_i$ and relatively prime to each $p_i$ for which $k_i=1$?  How else can we characterize the structure of $S$?
\end{question}
For the sake of completeness, we must mention some irregularities in the values of the Davenport constant.  The proof of Theorem~\ref{thm:twoprimes} depends on explicit evaluations of Davenport constants, namely, $\mathsf{D}(G)=\mathsf{M}(G)$ for the relevant rank-$2$ groups $G$.  Although it is known \cite{geroldinger1992davenport} that $\mathsf{D}(G)=\mathsf{M}(G)$ for a few classes of abelian groups beyond what we mention in Theorem~\ref{thm:olson}, it is also known that that this formula fails for infinitely many abelian groups of rank at least $4$. Hence, an approach that uses explicit values of the Davenport constant seems to fail in general but may work when $n$ has three prime factors since the problem of determining the Davenport constant for all rank-$3$ groups remains open. If Conjecture~\ref{conj:equality} turns out to be false, it may be possible to construct counterexamples using these anomalous Davenport constants.
\\

Finally, it would be interesting to see how the results of Sections~\ref{sec:upper} through \ref{sec:twoprimes} generalize to UFDs and Dedekind domains.

\section*{Acknowledgements}
This research was conducted at the University of Minnesota, Duluth REU. It was supported by NSF/DMS grant 1650947 and NSA grant H98230-18-1-0010. The authors would like to thank Joe Gallian for running the program. The authors also thank Joe Gallian and Brice Huang for helpful comments on the manuscript.

\bibliographystyle{plain}
\bibliography{main}

\end{document}